\DeclareCiteCommand{\supercite}[\mkbibsuperscript]{%
\iffieldundef{prenote}{}{\BibliographyWarning{Ignoring prenote argument}}%
\iffieldundef{postnote}{}{}}%
{\bibopenbracket\usebibmacro{citeindex}\usebibmacro{cite}\usebibmacro{postnote}\bibclosebracket}{\supercitedelim}{}
\pgfplotsset{compat=1.14}
\theoremstyle{nonumberplain}
\newtheorem{proof}{Proof}
\theoremstyle{plain}
\newtheorem{theorem}{Theorem}[section]
\newtheorem{lemma}[theorem]{Lemma}
\colorlet{internallinkcolour}{green!50!black}
\colorlet{externallinkcolour}{red!50!black}
\crefname{equation}{}{}
\crefname{assumption}{assumption}{assumptions}
\Crefname{assumption}{Assumption}{Assumptions}
\crefname{subsection}{subsection}{subsections}
\Crefname{subsection}{Subsection}{Subsections}
\newcommand{\bigO}[1]{\mathcal{O}\left({#1}\right)}
\newcommand{\spdim}{\mathrm{D}}   % Spatial Dimension
\newcommand{\reals}{\mathbb{R}}
\newcommand{\wholespace}{\reals^\spdim}
\newcommand{\pd}{\partial}
\newcommand{\ddx}[2]{\frac{{\mathrm d}{#1}}{{\mathrm d}{#2}}}
\newcommand{\pdx}[2]{\frac{\partial {#1}}{\partial {#2}}}
\newcommand{\idx}[4]{\int_{#1}^{#2} {#3}\,{\mathrm d}{#4}}
\newcommand{\boldnabla}{\boldsymbol{\nabla}}
\newcommand{\boldlaplace}{\boldsymbol{\Delta}}
\newcommand{\vv}[1]{\mathbf{#1}}        % Vectors
\newcommand{\gv}[1]{\boldsymbol{#1}}    % Vectors  using Greek symbols
\DeclareMathOperator{\diam}{diam}
\DeclareMathOperator{\meas}{meas}
\newcommand{\traj}[2]{\gv{\Phi}_{\kern -1pt #1}^{\kern -1pt #2}}
\newcommand{\sci}[2]{\ensuremath{{#1}\times10^{#2}}}
\begin{document}

\renewcommand*{\thefootnote}{\fnsymbol{footnote}}
\title{A Particle Method without Remeshing}
\author{Matthias Kirchhart\footnote{Centre for Computational Engineering Science,
Mathematics Division, RWTH Aachen University. Schinkelstraße~2, 52062~Aachen,
Germany. E-Mail: \url{kirchhart@mathcces.rwth-aachen.de}}\and Christian Rieger%
\footnote{Institute for Numerical Simulation, University of Bonn. Endenicher
Allee~196, 53115~Bonn, Germany. \url{rieger@ins.uni-bonn.de}}}
\date{}
\maketitle

\vspace{-2\baselineskip}
\begin{abstract}\noindent
We propose a simple tweak to a recently developed regularisation scheme for
particle methods. This allows us to choose the particle spacing $h$ proportional
to the regularisation length $\sigma$ and achieve optimal error bounds of the
form $\mathcal{O}(\sigma^n)$, $n\in\mathbb{N}$, without any need of remeshing.
We prove this result for the linear advection equation but also carry out high-%
order experiments on the full Navier--Stokes equations. In our experiments the
particle methods proved to be highly accurate, long-term stable, and competitive
with discontinuous Galerkin methods.\\[1em]
\textbf{Keywords:} particle methods; numerical analysis;
computational fluid dynamics; level-set method
\end{abstract}

\section{Introduction}
The convergence of many classical, uniform discretisations of partial differential
equations is often governed by only two parameters: the discretisation order
$n\in\mathbb{N}$ and the underlying mesh-size $\sigma>0$. Assuming that the exact
solution is smooth enough, one typically obtains error bounds of the type
$\bigO{\sigma^n}$.\autocite[Theorem~(5.4.8)]{brenner2008} Particle methods like
Smoothed Particle Hydrodynamics~(SPH) or Vortex Methods~(VM), on the other hand,
feature \emph{two} orders $n,m\in\mathbb{N}$ and \emph{two} sizes $h,\sigma>0$.
Here, $h$ describes some form of particle spacing and $m$ the order of an underlying
quadrature rule, while on the other hand $\sigma$ describes a smoothing length
and $n$ the order of a regularisation scheme. These parameters need to be very
carefully chosen to ensure convergence. Typical error estimates for solutions to,
e.\,g., the linear advection equation read:\autocite[Theorem~4.2]{raviart1985}
\begin{equation}\label{eqn:classicbound}
\Vert u-u_{h,\sigma}\Vert_{L^p(\Omega)} =
\bigO{\sigma^n+\biggl(\frac{h}{\sigma}\biggr)^m}.
\end{equation}
For fixed values of $n$, $m$, and $\sigma$, the optimal choice of $h$ thus is a
compromise that balances both contributions. In the case $n=m$, this results in
$h\sim\sigma^2$; in other words the particle spacing $h$ needs to be negligible
compared to the smoothing length. In practice this often is prohibitively
expensive. An alternative is to \enquote{cheat} and set $m=\infty$: on first
sight this would allow for choosing $h\sim\sigma^{1+\varepsilon}$ for any
$\varepsilon>0$, i.\,e., $h$ could essentially be chosen proportional to $\sigma$.
However, the constants hidden in the $\mathcal{O}$-notation very quickly grow with
$m$ and time $t$, and thus also make this approach infeasible in practice. For
this reason current particle methods typically have to \emph{remesh} the
particles to their original locations after every other time-step or so. But
particle approximations are \emph{exact solutions} of the advection equation,
in a sense that will be made clear in \Cref{subsec:particleapprox}. Apart from
destroying the purely Lagrangian character of the method, the remeshing process
deviates from this exact solution and thereby introduces further errors.

In this work we describe a surprisingly simple tweak to a recently developed
regularisation scheme, enabling us to chose $h\sim\sigma$ and $m$ independent of
$n$ while still yielding the optimal error bound of $\bigO{\sigma^n}$. In our
numerical experiments this difference turned out to be dramatic and enabled us
to perform long-term simulations without remeshing.

The rest of this article is structured as follows. In \cref{sec:particlemethods}
we review \enquote{classical} particle methods and some of the key results from
their analysis. In \cref{sec:literature} we give references to the literature
for further reading and related results. \Cref{sec:splineapproach} is the core
of this article: we describe a simple tweak to a recently proposed scheme:
particle regularisation by projection onto spline spaces. A complete convergence
analysis for the linear advection equation is provided. Finally, in
\cref{sec:experiments} we carry out numerical experiments for the full, non-linear
Navier--Stokes equations. For simplicity, throughout this article we restrict
ourselves to the geometry of an axis-aligned cube, but point out that the
regularisation scheme also generalises to the case of domains with arbitrary
Lipschitz boundary. We conclude with some remarks on remaining open problems 
and possible future extensions.

\section{Particle Methods}\label{sec:particlemethods}
In this section we recall the necessary ingredients to describe particle
methods. For a rigorous derivation of these results the reader is referred to
Raviart's lecture notes.\autocite{raviart1985}

\subsection{Linear Advection Equation}
In order to keep our focus on the problem at hand, we will discuss particle
methods in one of the simplest possible settings: the linear advection equation
in the box. Thus let $\Omega:=(0,1)^\spdim$, typically $\spdim\in\lbrace 2,3\rbrace$.
Let $\vv{a}:\Omega\times[0,T]\to\wholespace$ denote a given smooth and bounded
velocity field that---for simplicity---also satisfies $\nabla\cdot\vv{a}\equiv 0$
and $\vv{a}\cdot\vv{n} = 0$ on $\pd\Omega$. Given continuous initial data $u_0\in
C(\Omega)$, we are looking for the solution $u:\Omega\times[0,T]\to\reals$ of the
following initial value problem:
\begin{equation}\label{eqn:advequation}
\begin{split}
\left\lbrace
\begin{aligned}
\pdx{u}{t} + (\vv{a}\cdot\nabla)u &= 0            &    \text{in }&\Omega\times[0,T], \\
u(\vv{x},0)                       &= u_0(\vv{x})  &    \text{on }&\Omega.
\end{aligned}
\right.
\end{split}
\end{equation}
Throughout this article we demand continuity of the initial data, such that
point-wise evaluation is well-defined. It is well-known that problem
\eqref{eqn:advequation} can be solved by the method of characteristics. Thus,
for any $(\vv{x},\tau)\in\Omega\times[0,T]$ let us define the \emph{trajectory}
$\vv{X}(t;\vv{x},\tau)$ as the solution of the following initial value problem:
\begin{equation}
\begin{split}
\left\lbrace
\begin{aligned}
\ddx{\vv{X}}{t}(t;\vv{x},\tau) &= \vv{a}\bigl(\vv{X}(t;\vv{x},\tau),t\bigr), \\
\vv{X}(\tau;\vv{x},\tau) &= \vv{x}.
\end{aligned}
\right.
\end{split}
\end{equation} 
By the Picard--Lindelöf theorem one obtains that $\vv{X}$ is well-defined.
Moreover, it can be shown that the map $\traj{\tau}{t}(\vv{x}) := \vv{X}(t;
\vv{x},\tau)$ is a diffeomorphism with inverse $\traj{t}{\tau}$. Colloquially
speaking $\traj{\tau}{t}(\vv{x})$ tells us where the particle with position
$\vv{x}$ at time $\tau$ will be at another time $t$. The solution to the
advection equation~\eqref{eqn:advequation} then is given by $u(\vv{x},t)=
u_0\bigl(\traj{t}{0}(\vv{x})\bigr)$.\autocite[Equation~(1.11)]{raviart1985}

In the lecture notes of Raviart\autocite{raviart1985} the theory is then
extended to weak solutions in Sobolev spaces and it is shown that the solutions
are bounded, i.\,e.:
\begin{equation}\label{eqn:advstability}
\Vert u(\cdot,t)\Vert_{W^{s,p}(\Omega)}\lesssim\Vert u_0\Vert_{W^{s,p}(\Omega)}
\qquad t\in [0,T],\ s\in\mathbb{R},\ p\in[1,\infty].
\end{equation} 
Throughout this article the symbol $C$ refers to a generic constant $C>0$ that
is independent of $h$, $\sigma$, and the functions occurring in the norms. The
notation $a\lesssim b$ means $a\leq Cb$ for some $C>0$, and $a\sim b$ means
$a\lesssim b\lesssim a$. In estimate~\eqref{eqn:advstability}, for example, the
hidden constant depends on $\vv{a}$ and $T$, but is independent of $u(\cdot,t)$,
$u_0$, and $t$.

\subsection{Particle Approximations}\label{subsec:particleapprox}
A simple, intuitive approach to numerically solving the advection equation~%
\eqref{eqn:advequation} could consist of storing samples $u_i := u_0(\vv{x}_i)$
of the initial data $u_0$ at a finite set of locations $\vv{x}_i\in\Omega$, $i=1,
\dotsc,N$. One could then track these \emph{particles} over time by solving
$\dot{\vv{x}}_i(t) = \vv{a}(\vv{x}_i(t),t)$ by means of, e.\,g., a Runge--Kutta
method. We then know that at any time $t$ we have $u(\vv{x}_i(t),t) = u_i$. The
question that then arises, however, is what happens in-between the particles.
One could, of course, devise interpolation schemes, but these do not easily
generalise to general bounded domains.

A related idea involves quadrature rules. Let us subdivide the domain $\Omega=
(0,1)^\spdim$ into uniform squares/cubes of edge length $h$. To each of these
cells we can apply a quadrature rule of polynomial exactness degree $m-1$ with
positive weights, e.\,g., Gauß--Legendre rules. This yields a set of nodes
$\vv{x}_i$ with associated weights $w_i$, $i=1,\dotsc,N$. Furthermore setting
$u_i:=u_0(\vv{x}_i)$, $i=1,\dotsc,N$, and letting $\delta$ denote the Dirac
delta distribution, the functional $u_h(t=0):=\sum_{i=1}^{N}w_iu_i\delta_{\vv{x}_i}$
approximates $u_0$ in the sense that for smooth functions $\varphi$ it holds
that:
\begin{equation}
\langle u_h(0),\varphi\rangle = \sum_{i=1}^N w_iu_i\varphi(\vv{x}_i) 
\approx
\idx{\Omega}{}{u_0\varphi}{\vv{x}}.
\end{equation}

This approach has the advantage that error-bounds are readily available. To
avoid technicalities, let us for simplicity assume that $m\geq\spdim$, such
that by the Sobolev embedding theorem functions from $W^{s,p}(\Omega)\hookrightarrow
C(\Omega)$ are continuous for all $m\geq s\geq\spdim$, $p\in[1,\infty]$. The
following bound then is a simple consequence of the Bramble--Hilbert lemma%
\autocite[Lemma~(4.3.8)]{brenner2008}:
\begin{equation}\label{eqn:quaderr}
\Vert u_h(0) - u_0\Vert_{W^{-s,p}(\Omega)}\lesssim h^s\Vert u_0\Vert_{W^{s,p}(\Omega)}
\qquad m\geq s\geq\spdim,\ p\in[1,\infty].
\end{equation}

The \emph{exact solution} to the advection equation~\eqref{eqn:advequation} with $u_0$
replaced by $u_h(0)$ is again given by moving the particles according to
$\dot{\vv{x}}_i(t)=\vv{a}(\vv{x}_i(t),t)$. Together with the stability estimate~%
\eqref{eqn:advstability} this immediately yields:
\begin{equation}
\Vert u_h(t) - u(t)\Vert_{W^{-s,p}(\Omega)}\lesssim h^s\Vert u_0\Vert_{W^{s,p}(\Omega)}
\qquad m\geq s\geq\spdim,\ p\in[1,\infty].
\end{equation}
The problem here, however, is even worse. In fact, $u_h$ is an irregular
distribution that cannot be interpreted as an ordinary function. Even at the
particle locations, we strictly speaking do not have function values, but only
weights $w_iu_i$.

On paper the two approaches are of course somehow equivalent: it is trivial
to obtain the function value $u_i$ from the weight $w_iu_i$ and vice versa. But
on the one hand, the first approach yields function values but does not allow us
to perform numerical integration, while in the second approach the situation is
reversed.

\subsection{Particle Regularisation}\label{subsec:regularisation}
Particle regularisation refers to the process of obtaining a function
$u_{h,\sigma}$ from a given particle approximation $u_h$ that is interpreted as
a quadrature rule. The most common approach uses mollification and is more
easily explained in the whole-space case $\Omega=\wholespace$. Let $\zeta:
\wholespace\to\reals$ be a smooth function that fulfils $\idx{\wholespace}{}{%
\zeta(\vv{x})\vv{x}^\alpha}{\vv{x}} = \vv{x}^\alpha|_{\vv{x}=0}$ for all multi-%
indeces $|\alpha|<n$ and some fixed $n\in\mathbb{N}$. In other words, for
$|\alpha|<n$, convolution of $\vv{x}^\alpha$ with $\zeta$ behaves like convolution
with the Dirac delta distribution: $\bigl(\zeta\star\vv{x}^\alpha\bigr) =
\bigl(\delta\star\vv{x}^\alpha\bigr)= \vv{x}^\alpha$ and in this sense
$\zeta\approx\delta$. Furthermore, for stability, it should hold that
$\idx{\wholespace}{}{|\vv{x}|^n|\zeta(\vv{x})|}{\vv{x}} < \infty$. A multitude
of such \emph{kernel} or \emph{blob functions} is available in the literature.%
\autocite[Section~2.3]{cottet2000} After choosing $\zeta$ and some $\sigma>0$
one scales $\zeta_\sigma(\vv{x}):= \sigma^{-\spdim}\zeta(\tfrac{\vv{x}}{\sigma})$.
Finally, because $u_h\approx u$ and $\zeta_\sigma\approx\delta$ one obtains $u =
u\star\delta \approx u_h\star\zeta_\sigma =: u_{h,\sigma}$, that is:
\begin{equation}
u_{h,\sigma}(\vv{x},t):= \sum_{i=1}^{N}w_iu_i\zeta_\sigma\bigl(\vv{x}-\vv{x}_i(t)\bigr).
\end{equation}
A careful analysis then reveals the aforementioned error bound%
\autocite[Theorem~4.2]{raviart1985}:
\begin{equation}
\Vert u(\cdot,t)-u_{h,\sigma}(\cdot,t)\Vert_{L^p(\Omega)}\lesssim\biggl(
\sigma^n + \biggl(\frac{h}{\sigma}\biggr)^m
\biggr)\Vert u_0\Vert_{W^{\max{\lbrace n,m\rbrace},p}(\Omega)}.
\end{equation}
The origin of the $\left(\tfrac{h}{\sigma}\right)^m$-term lies in the quadrature
error estimate~\eqref{eqn:quaderr}. On the left the error is measured in the
$W^{-m,p}$ norm, on the right we have the $W^{m,p}$ norm, i.\,e., a difference of
$2m$ orders. Yet, the quadrature error is only $\bigO{h^m}$ as opposed to
$\bigO{h^{2m}}$. Because of Bakhvalov's theorem,\autocite{sobolev1997} the
$\bigO{h^m}$ bound is asymptotically optimal and cannot be improved. This
ultimately forces one to choose $h\ll\sigma$.

\section{The Proposed Method in Context of the Literature}\label{sec:literature}
Vortex methods are the oldest particle methods and can at least be traced back
to the early 1930s, when Rosenhead tried to numerically answer the question
whether vortex sheets roll up.\autocite{rosenhead1931} The first regularised
vortex methods appeared much later in the early 1970s due to Chorin\autocite{chorin1973},
who used a blob-based regularisation, and Christiansen\autocite{christiansen1973},
who used a grid-based regularisation on simple, rectangular domains. The underlying
ideas of particle methods have been re-introduced in different contexts at least
three times: in the late 1950s Harlow\autocite{harlow1957} and Evans and Harlow%
\autocite{harlow1957b} introduced the Particle-in-Cell~(PIC) method. The mapping
between particle and grid quantities is a regularisation step, though this fact
is not emphasised in these works. Lucy\autocite{lucy1977} as well as Gingold and
Monaghan\autocite{gingold1977} laid the ground for Smoothed Particle Hydrodynamics~(SPH),
making use of a blob-based regularisation.

Vortex particle methods using the blob-regularisation were first analysed by
Dushane\autocite{dushane1973} and later by Hald and Mauceri Del Prete\autocite{%
hald1978} and Hald\autocite{hald1979}. Many contributions followed their work,
and we refer to Leonard\autocite{leonard1980,leonard1985} for historic comments.
Later it was realised that particle approximations that are interpreted as a
quadrature rule correspond to \emph{exact solutions} of a weak formulation of
the transport equation. This lead to a new, simplified type of convergence proofs
due to Raviart\autocite{raviart1985} and Cottet\autocite{cottet1988}. To our
knowledge, the latter work also contains the first convergence proofs for
vortex methods using a grid-based regularisation. In the year 2000 Cottet and
Koumoutsakos published the first monograph on vortex methods.\autocite{cottet2000}
This work also contains many more historic remarks and an extensive bibliography.
Recently, we proposed regularisation schemes based on the $L^2$-projection onto
finite element and spline spaces with similar error bounds that also work in
general, bounded domains.\autocite{kirchhart2017b,kirchhart2019} These techniques
are reminiscent of the earlier FEM-blobs suggested by Merriman.\autocite{merriman1991}
All of these analyses feature typical error-bounds like the one in equation~\cref{%
eqn:classicbound}.

In the context of vortex methods, remeshing was introduced by Koumoutsakos.%
\autocite{koumoutsakos1997}  It now is ubiquitous in practice,\autocite{%
koumoutsakos2008,mimeau2017,gillis2019} and has also been the subject of
numerical analysis\autocite{cottet2014}.

Cohen and Berthame\autocite{cohen2000} pointed out that, at least in principle,
the optimal convergence order $\bigO{\sigma^n}$ can be restored in particle
methods when considering function values $u_i$ instead of weights $w_iu_i$.
They devise a scheme that employs discontinuous, piece-wise polynomial
interpolations to achieve this error-bound. The triangulated vortex method of
Russo and Strain\autocite{russo1994} creates a triangulation of the domain using
the particle locations as grid-points. The particle field is then regularised by
using piece-wise linear interpolation on each triangle.

The approach discussed in this article is slightly similar in the regard that it
also uses function values $u_i$ and a finite element function space. However it
differs in the regard that it additionally makes use of the quadrature weights
$w_i$, does not require a triangulation that follows the grid-points, and easily
generalises to bounded domains and arbitrary order $n\in\mathbb{N}$. Moreover,
our method is conservative.

\section{Regularisation by Projection}\label{sec:splineapproach}
Assume we are given a continuous finite element space $V_\sigma^n(\Omega)\subset
C(\Omega)$ of mesh-width $\sigma$ and order~$n\in\mathbb{N}$. Furthermore assume 
that we are given a particle approximation $u_h = \sum_{i=1}^{N}w_iu_i\delta_{\vv{x}_i}$
that we want to regularise, where for brevity we sometimes omit the dependency on
time $t$ in our notation. Regularisation by projection now corresponds to finding
the solution $u_{h,\sigma}\in V_{\sigma}^n(\Omega)$ of the following system:
\begin{equation}
\idx{\Omega}{}{u_{h,\sigma}v_\sigma}{\vv{x}} = \sum_{i=1}^{N}w_iu_iv_\sigma(\vv{x}_i)
\qquad\forall v_\sigma\in V_{\sigma}^n(\Omega).
\end{equation}
Together with a fictitious domain approach, this idea generalises to arbitrary
domains $\Omega$, and a numerical analysis\autocite{kirchhart2017b,kirchhart2019}
reveals error bounds of the type~\eqref{eqn:classicbound}. The new approach
consists of replacing the \emph{exact integral} on the left by \emph{numerical
integration} using the nodes $\vv{x}_i$ and weights $w_i$ of $u_h$, i.\,e., we
instead find $u_{h,\sigma}\in V_\sigma^n(\Omega)$ such that:
\begin{equation}\label{eqn:neqapproach}
\sum_{i=1}^{N}w_i u_{h,\sigma}(\vv{x}_i)v_\sigma(\vv{x}_i) =
\sum_{i=1}^{N}w_iu_iv_\sigma(\vv{x}_i)
\qquad\forall v_\sigma\in V_{\sigma}^n(\Omega).
\end{equation}
Despite the additional error from discretising the integral, this approach does
in fact yield the desired error-bound of $\bigO{\sigma^n}$ for $h\sim\sigma$ and
arbitrary $m\geq 1$. The following three sub-sections are devoted to proving this
claim. Afterwards we discuss the relationship of this method to conventional
blob-based approaches.

\subsection{Spline Spaces}
For our ansatz spaces $V_\sigma^n(\Omega)$ we will use Cartesian tensor product
splines, although other conventional, $W^{1,\infty}(\Omega)$-conforming finite
element spaces would also be possible. Let us create a Cartesian grid of size
$\sigma>0$ for the domain $\Omega=(0,1)^\spdim$, whose cubes we will refer to as
$Q^\sigma_i\in\Omega$. For $n\in \mathbb{N}$ we then define our ansatz space as
follows:
\begin{equation}
V_\sigma^n(\Omega) := \bigl\lbrace u\in C^{n-2}(\Omega):\ 
u|_{Q^\sigma_i}\in\mathbb{Q}_{n-1},\ Q^\sigma_i\in\Omega\bigr\rbrace,
\end{equation}
where $\mathbb{Q}_{n-1}$ refers to the space of polynomials of \emph{coordinate-%
wise} degree $n-1$ or less. For $n=1$ one obtains the space of piecewise constants.
To ensure continuity, we will later restrict ourselves to $n\geq 2$.

It will sometimes be useful to specify the norm we employ on these spaces
explicitly. In these cases, for $p\in[1,\infty]$, we will write $V_\sigma^{n,p}
(\Omega)$ to refer the space $V_\sigma^n(\Omega)$ equipped with the $L^p(\Omega)$-%
norm. In the other cases the index $p$ will be omitted. Furthermore, in analogy
to the Sobolev Spaces, we will write $V_\sigma^{-n,p}(\Omega)$ for the normed dual
of $V_\sigma^{n,q}(\Omega)$, $\tfrac{1}{p} + \tfrac{1}{q}=1$.

We will assume that the reader is familiar with the basic properties of these
spaces, which are, e.\,g., described in great detail Schumaker's book\autocite{schumaker2007}.
We will in particular make use of the quasi-interpolator $P_\sigma^n: L^1(\Omega)
\to V_\sigma^n(\Omega)$, which has the following properties:\autocite[Theorems~12.6 and 12.7]{schumaker2007}
\begin{align}
\label{eqn:interpolantbound}
\Vert P_\sigma^n u\Vert_{W^{s,p}(Q_i^\sigma)} &\lesssim
\Vert u\Vert_{W^{s,p}(\hat{Q}_i^\sigma)} &
&\forall Q^\sigma_i\in\Omega, 0 \leq s\leq n-1,\ p\in[1,\infty],\\
\label{eqn:interpolanterror}
\Vert u - P_\sigma^nu\Vert_{L^p(Q_i^\sigma)} &\lesssim
\sigma^s\vert u\vert_{W^{s,p}(\hat{Q}_i^\sigma)} &
&\forall Q^\sigma_i\in\Omega,\ 0 \leq s\leq n,\ p\in[1,\infty],
\end{align}
where the hidden constants only depend on $n$, $\spdim$, and $s$. Here
$\hat{Q}_i^\sigma\supset Q_i^\sigma$ is only slightly larger than $Q_i^\sigma$,
in particular we have $\meas_\spdim(\hat{Q}_i^\sigma)\leq C(n,\spdim)\sigma^\spdim$.
$P_\sigma^n u$ also approximates the derivatives of $u$, but for simplicity we
will only discuss the $L^p(\Omega)$-norms here.

We will also make frequent use of so-called inverse estimates, which for
$v\in V_\sigma^n(\Omega)$ allow us to estimate stronger norms by weaker ones.%
\autocite[Section~4.5]{brenner2008} For this let $Q_i^\sigma$ denote an arbitrary
cube from the Cartesian grid. One then has locally:
\begin{equation}
\Vert u\Vert_{W^{s,p}(Q_i^\sigma)}\lesssim
\sigma^{\frac{\spdim}{p}-\frac{\spdim}{q}}\sigma^{r-s}
\Vert u\Vert_{W^{r,q}(Q_i^\sigma)}\qquad \forall\ p,q\in[1,\infty], 0\leq r\leq s\leq n-1,
\end{equation}
and globally:
\begin{equation}\label{eqn:globinverseineq}
\Vert u\Vert_{W^{s,p}(\Omega)}\lesssim
\sigma^{\min\lbrace 0,\frac{\spdim}{p}-\frac{\spdim}{q}\rbrace}\sigma^{r-s}
\Vert u\Vert_{W^{r,q}(\Omega)}\qquad \forall\ p,q\in[1,\infty], 0\leq r\leq s\leq n-1.
\end{equation}
Here the hidden constants only depend on $p$, $q$, $n$, $r$, $s,$ and $\spdim$.
The global inequality holds in general for any finite union of entire cubes $Q_i^\sigma$
from the Cartesian grid.

\subsection{Particle Approximation}\label{sec:particleapprox}
As mentioned before, for our purposes it is enough to consider quadrature rules
of order $m=1$. We create \emph{another} Cartesian grid of size $h\leq\sigma$.
Into each of its cells $Q^h_i$, $i=1,\dotsc,N=h^{-\spdim}$, we place a particle
$\vv{x}_i$ with weight $w_i:=h^\spdim$. The particles do not necessarily need
to be placed at the centres. The particles are then moved over time according
to $\dot{\vv{x}}_i(t)=\vv{a}\bigl(\vv{x}_i(t),t\bigr)$, $i=1,\dotsc,N$.
To fully specify the particle approximation $u_h(t) = \sum_{i=1}^{N}w_iu_i
\delta_{\vv{x}_i(t)}$, we define $u_i=u_0\bigl(\vv{x}_i(0)\bigr)$. 

\begin{lemma}\label{lem:quadrule}
Let $u_\sigma,v_\sigma\in V_\sigma^n(\Omega)$, $n\geq 2$, and $h\leq\sigma$. Then
for all times $t\in[0,T]$ one has with the hidden constant depending on $\vv{a}$,
$n$, $\spdim$, and $T$:
\begin{equation}
\label{eqn:quaderror}
\left|\idx{\Omega}{}{u_\sigma v_\sigma}{\vv{x}}-
\sum_{i=1}^{N}w_iu_\sigma\bigl(\vv{x}_i(t)\bigr)v_\sigma\bigl(\vv{x}_i(t)\bigr)\right|
\lesssim h\vert u_\sigma v_\sigma\vert_{W^{1,1}(\Omega)}.
\end{equation}
\end{lemma}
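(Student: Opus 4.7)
The plan rests on two facts about the setup. First, the flow $\traj{0}{t}$ is volume-preserving because $\nabla\cdot\vv{a}\equiv 0$; second, it is uniformly bi-Lipschitz for $t\in[0,T]$ with constants depending only on $\vv{a}$ and $T$, by Gr\"onwall applied to $\dot{\vv{X}}=\vv{a}(\vv{X},t)$. Setting $g(\vv{y}):=(u_\sigma v_\sigma)(\traj{0}{t}(\vv{y}))$ and changing variables $\vv{x}=\traj{0}{t}(\vv{y})$ pulls the integral back to time~$0$, where the particles $\vv{y}_i:=\vv{x}_i(0)$ lie one per uniform cube $Q_i^h$ with weight $w_i=h^\spdim=|Q_i^h|$. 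The quadrature error then takes the form
\[
\idx{\Omega}{}{u_\sigma v_\sigma}{\vv{x}}-\sum_{i=1}^{N} w_i (u_\sigma v_\sigma)(\vv{x}_i(t))=\sum_{i=1}^{N}\int_{Q_i^h}\bigl[g(\vv{y})-g(\vv{y}_i)\bigr]\,\dd\vv{y},
\]
i.e.\ the error of a standard midpoint-type rule on the undeformed $h$-grid, applied to $g$.

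On each cube $Q_i^h$, convexity and $g\in W^{1,\infty}(\Omega)$ (the composition of a globally continuous, piecewise-polynomial function with a Lipschitz diffeomorphism) yield $|g(\vv{y})-g(\vv{y}_i)|\lesssim h\,\|\nabla g\|_{L^\infty(Q_i^h)}$. The chain rule together with the uniform Lipschitz bound on $\traj{0}{t}$ then gives $\|\nabla g\|_{L^\infty(Q_i^h)}\lesssim\|\nabla(u_\sigma v_\sigma)\|_{L^\infty(Q_i^h(t))}$, where $Q_i^h(t):=\traj{0}{t}(Q_i^h)$ has diameter $\lesssim h\le\sigma$ and measure $h^\spdim$.

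To close the argument I would regroup the deformed cells by the $\sigma$-cell they lie in. Since $h\le\sigma$, each $Q_i^h(t)$ meets only $\bigO{1}$ cubes of the $\sigma$-grid, and conversely only $\lesssim(\sigma/h)^\spdim$ of them fit into a fixed $\sigma$-neighbourhood. On each $Q_j^\sigma$ the gradient $\nabla(u_\sigma v_\sigma)$ is a polynomial of coordinatewise degree $\le 2n-3$, so the local inverse estimate $\|\nabla(u_\sigma v_\sigma)\|_{L^\infty(Q_j^\sigma)}\lesssim\sigma^{-\spdim}\|\nabla(u_\sigma v_\sigma)\|_{L^1(Q_j^\sigma)}$ applies. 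Summing then telescopes to
\[
h\sum_{i=1}^{N} h^\spdim\|\nabla(u_\sigma v_\sigma)\|_{L^\infty(Q_i^h(t))}\lesssim h\sum_j\|\nabla(u_\sigma v_\sigma)\|_{L^1(Q_j^\sigma)}=h\,|u_\sigma v_\sigma|_{W^{1,1}(\Omega)},
\]
which is the claim.

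The main obstacle is the bookkeeping in this last step: because the $h$- and $\sigma$-grids are unrelated and the flow may carry $h$-cells across $\sigma$-cell faces (across which $\nabla(u_\sigma v_\sigma)$ jumps), one cannot apply the inverse estimate on $Q_i^h(t)$ itself. The combined use of $h\le\sigma$ and the uniform deformation bound is precisely what localises each $Q_i^h(t)$ inside an $\bigO{1}$-sized union of $\sigma$-cells, so that the $\sigma^{-\spdim}$ picked up from the inverse estimate can be balanced against the measure $\sigma^\spdim$ accumulated from the $(\sigma/h)^\spdim$ cells fitting inside.
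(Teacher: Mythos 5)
Your argument is correct and essentially the same as the paper's: pull the integral back to time~$0$ via the volume-preserving, bi-Lipschitz flow, use a first-order Lipschitz estimate on each $h$-cell, and then convert the resulting $L^\infty$-gradient sums to $\vert u_\sigma v_\sigma\vert_{W^{1,1}(\Omega)}$ by covering each deformed cell with $\bigO{1}$ cubes of the $\sigma$-grid and invoking the inverse estimate together with the $(\sigma/h)^\spdim$ overlap count. The only (cosmetic) difference is that the paper inserts the piecewise-constant quasi-interpolant $P_h^1 f_0$ and splits the error into an integral part and a quadrature-sum part, whereas you write the error directly as a one-point rule error on the undeformed cells, which streamlines the same computation.
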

\begin{proof}
Let us abbreviate $f:=u_\sigma v_\sigma$, and note that because $n\geq 2$ we
have $f\in W^{1,\infty}(\Omega)$. We furthermore denote $f_0:=f\circ\traj{0}{t}$,
$f_{0,h}:=P_h^1f_0$, and note that the quadrature rule integrates $f_{0,h}\circ
\traj{t}{0}$ exactly. Because $\nabla\cdot\vv{a} = 0$ we have
$|{\kern-1pt}\det\boldnabla\traj{0}{t}|=|{\kern-1pt}\det\boldnabla\traj{t}{0}|
=1$.\autocite[Lemma~1.2]{raviart1985} We obtain by transforming the integral
and~\eqref{eqn:interpolanterror}:
\begin{equation}
\Vert f   - f_{0,h}\circ\traj{t}{0}\Vert_{L^1(\Omega)} =
\Vert f\circ\traj{0}{t} - f_{0,h}\Vert_{L^1(\Omega)} =
\Vert f_0 - P_h^1f_0\Vert_{L^1(\Omega)}
\stackrel{\eqref{eqn:interpolanterror}}{\lesssim} h
\vert f_0\vert_{W^{1,1}(\Omega)}.
\end{equation}
But because $\vv{a}$ is smooth and bounded, so are the derivatives of $\traj{0}{t}$
and we may write by Hölder's inequality $\vert f_0\vert_{W^{1,1}(\Omega)} = 
\vert f\circ\traj{0}{t}\vert_{W^{1,1}(\Omega)}\lesssim
\vert f\vert_{W^{1,1}(\Omega)}$. For the quadrature rule we obtain by the triangular
inequality:
\begin{equation}
\sum_{i=1}^{N}w_i\left(f-f_{0,h}\circ\traj{t}{0}\right)\bigl(\vv{x}_i(t)\bigr)
\leq
h^\spdim\sum_{i=1}^{N}\Vert f-f_{0,h}\circ\traj{t}{0}\Vert_{L^\infty(\traj{0}{t}(Q_i^h))}
= h^\spdim\sum_{i=1}^{N}\Vert f_0-f_{0,h}\Vert_{L^\infty(Q_i^h)}.
\end{equation}
We now make use of the properties of $P_h^1$ and the boundedness of $\boldnabla
\traj{0}{t}$ to obtain:
\begin{equation}
h^\spdim\sum_{i=1}^{N}\Vert f_0-f_{0,h}\Vert_{L^\infty(Q_i^h)}
\stackrel{\eqref{eqn:interpolanterror}}{\lesssim}
h^{1+\spdim}\sum_{i=1}^{N}\vert f_0\vert_{W^{1,\infty}(\hat{Q}_i^h)} \lesssim
h^{1+\spdim}\sum_{i=1}^{N}\vert f\vert_{W^{1,\infty}(\traj{0}{t}(\hat{Q}_i^h))}.
\end{equation}
Because $|\kern-1pt\det\boldnabla\traj{0}{t}|=1$ we have $\meas_{\spdim}(\traj{0}{t}(\hat{Q}^h_i))=
\meas_D(\hat{Q}^h_i)\lesssim h^\spdim$. Furthermore, by the Lipschitz continuity of
$\traj{0}{t}$, we obtain that $\diam(\traj{0}{t}(\hat{Q}_i^h))\lesssim h$. For
each index $i$ there therefore exists a bounded number of cubes $Q^\sigma_j$ whose
union $K_i:=\bigcup_j Q^\sigma_j$ covers $\traj{0}{t}(\hat{Q}^h_i)$ and that
fulfils $\meas_\spdim(K_i) \lesssim \sigma^\spdim$. The $K_i$ therefore cover
$\Omega$ about $(\tfrac{\sigma}{h})^\spdim$ times, and we thus obtain with together
with inverse estimate~\eqref{eqn:globinverseineq}:
\begin{multline}
h^{1+\spdim}\sum_{i=1}^{N}\vert f\vert_{W^{1,\infty}(\traj{0}{t}(\hat{Q}_i^h))}\leq
h^{1+\spdim}\sum_{i=1}^{N}\vert f\vert_{W^{1,\infty}(K_i)}
\stackrel{\eqref{eqn:globinverseineq}}{\lesssim}
h\left(\frac{h}{\sigma}\right)^\spdim \sum_{i=1}^{N}\vert f\vert_{W^{1,1}(K_i)} \\\lesssim
h\left(\frac{h}{\sigma}\right)^\spdim \left(\frac{\sigma}{h}\right)^\spdim
\vert f\vert_{W^{1,1}(\Omega)} = h\vert f\vert_{W^{1,1}(\Omega)}.
\end{multline}
Thus we obtain in total:
\begin{multline}
\left|\idx{\Omega}{}{u_\sigma v_\sigma}{\vv{x}}-
\sum_{i=1}^{N}w_iu_\sigma\bigl(\vv{x}_i(t)\bigr)v_\sigma\bigl(\vv{x}_i(t)\bigr)\right|
= \\
\left|\idx{\Omega}{}{f-f_{0,h}\circ\traj{t}{0}}{\vv{x}}-
\sum_{i=1}^{N}w_i\bigl(f-f_{0,h}\circ\traj{t}{0}\bigr)\bigl(\vv{x}_i(t)\bigr)\right|
\lesssim h\vert u_\sigma v_\sigma\vert_{W^{1,1}(\Omega)}.
\end{multline}
\end{proof}

\subsection{Convergence}
Let $n\in\mathbb{N}$, $n\geq 2$, and $1\leq p\leq\infty$. Given the quadrature
rule from \cref{sec:particleapprox} at some time $t\in[0,T]$, we define the
following operators:
\begin{align}
A: V_\sigma^{n,p}(\Omega)\to V_\sigma^{-n,p}(\Omega),\quad
\langle Au_\sigma, v_\sigma\rangle &:= \idx{\Omega}{}{u_\sigma v_\sigma}{\vv{x}}, \\
A_h: V_\sigma^{n,p}(\Omega)\to V_\sigma^{-n,p}(\Omega),\quad
\langle A_hu_\sigma, v_\sigma\rangle &:= \sum_{i=1}^{N}w_iu_\sigma\bigl(\vv{x}_i(t)\bigr) v_\sigma\bigl(\vv{x}_i(t)\bigr).
\end{align}

\begin{lemma}[Stability]
Let $h=d\sigma$, with $0<d<1$ independent of $\sigma$ but small enough and
$n\geq 2$. Then for all $t\in[0,T]$ the operator $A_h$ is invertible and for all
$1\leq p\leq\infty$ its inverse is bounded:
\begin{equation}\label{eqn:stability}
\Vert A_h^{-1}\Vert_{V_\sigma^{-n,p}(\Omega)\to V_\sigma^{n,p}(\Omega)}\leq C.
\end{equation}
\end{lemma}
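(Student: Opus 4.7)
My plan is to interpret $A_h$ as a small perturbation of the exact mass operator $A$. Concretely, I would split the argument into three steps: (i) prove that $A$ is an isomorphism with $\Vert A^{-1}\Vert_{V_\sigma^{-n,p}(\Omega)\to V_\sigma^{n,p}(\Omega)}\le C_0$ independent of $\sigma$; (ii) use \cref{lem:quadrule} together with inverse estimates to bound $\Vert A-A_h\Vert\lesssim h/\sigma=d$; (iii) pick $d$ small enough that $\Vert A-A_h\Vert\le 1/(2C_0)$, so that for every $u_\sigma$
\begin{equation*}
\Vert A_h u_\sigma\Vert_{V_\sigma^{-n,p}}\ge\Vert A u_\sigma\Vert_{V_\sigma^{-n,p}}-\Vert(A-A_h)u_\sigma\Vert_{V_\sigma^{-n,p}}\ge\tfrac{1}{2C_0}\Vert u_\sigma\Vert_{L^p},
\end{equation*}
which yields injectivity, hence invertibility in the finite-dimensional setting, with $\Vert A_h^{-1}\Vert\le 2C_0$ as claimed in~\eqref{eqn:stability}.

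For step~(i), the upper bound $\Vert Au_\sigma\Vert_{V_\sigma^{-n,p}}\le\Vert u_\sigma\Vert_{L^p}$ is immediate from Hölder's inequality. For the matching lower bound I would exploit the $L^2$-orthogonal projection $\Pi_\sigma\colon L^q(\Omega)\to V_\sigma^{n,q}(\Omega)$, which for tensor-product splines on a Cartesian grid is a well-known case in which $\Vert\Pi_\sigma\Vert_{L^q\to L^q}$ is bounded uniformly in $\sigma$ for every $q\in[1,\infty]$. Given $u_\sigma\in V_\sigma^{n,p}(\Omega)$ with $1<p<\infty$, the candidate $g:=\vert u_\sigma\vert^{p-1}\mathrm{sgn}(u_\sigma)\in L^q(\Omega)$ satisfies $\int_\Omega u_\sigma g\,\dd\vv{x}=\Vert u_\sigma\Vert_{L^p}^p$, and Galerkin orthogonality together with $u_\sigma\in V_\sigma^n(\Omega)$ transfers this to $\int_\Omega u_\sigma\Pi_\sigma g\,\dd\vv{x}=\Vert u_\sigma\Vert_{L^p}^p$. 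Combined with $\Vert\Pi_\sigma g\Vert_{L^q}\lesssim\Vert u_\sigma\Vert_{L^p}^{p-1}$ this yields $\Vert u_\sigma\Vert_{L^p}\lesssim\Vert Au_\sigma\Vert_{V_\sigma^{-n,p}}$; the endpoints $p\in\lbrace 1,\infty\rbrace$ follow by analogous duality arguments, approximating sign or Dirac-type functionals by normalised spline basis functions.

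For step~(ii), \cref{lem:quadrule} applied to arbitrary $u_\sigma,v_\sigma\in V_\sigma^n(\Omega)$ gives
\begin{equation*}
\vert\langle(A-A_h)u_\sigma,v_\sigma\rangle\vert\lesssim h\vert u_\sigma v_\sigma\vert_{W^{1,1}(\Omega)}\le h\bigl(\Vert u_\sigma\Vert_{L^p}\Vert\boldnabla v_\sigma\Vert_{L^q}+\Vert v_\sigma\Vert_{L^q}\Vert\boldnabla u_\sigma\Vert_{L^p}\bigr),
\end{equation*}
by the product rule and Hölder's inequality. The global inverse estimate~\eqref{eqn:globinverseineq} (with $p=q$, $r=0$, $s=1$) supplies $\Vert\boldnabla u_\sigma\Vert_{L^p}\lesssim\sigma^{-1}\Vert u_\sigma\Vert_{L^p}$ and likewise for $v_\sigma$, so that after taking the supremum over $v_\sigma$ with $\Vert v_\sigma\Vert_{L^q}\le 1$ we obtain $\Vert(A-A_h)u_\sigma\Vert_{V_\sigma^{-n,p}}\lesssim(h/\sigma)\Vert u_\sigma\Vert_{L^p}=d\Vert u_\sigma\Vert_{L^p}$, uniformly in $\sigma$ and in $t\in[0,T]$.

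The main obstacle is step~(i): the $L^p$-stability of $A$ uniformly in $\sigma$ for the full range $p\in[1,\infty]$ relies on a uniformly $L^p$-stable projection onto the spline space, which is a known but non-trivial property of tensor-product B-splines on Cartesian grids. Once this ingredient is in hand, the perturbation estimate of step~(ii) and the \emph{a priori} bound of step~(iii) are essentially formal, and the final constant in~\eqref{eqn:stability} is inherited directly from $C_0$ and from the hidden constant in \cref{lem:quadrule}.
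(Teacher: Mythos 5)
Your argument is correct, and for $p\neq 2$ it takes a genuinely different route from the paper. The paper handles $p=2$ by a coercivity (G\aa rding-type) estimate — essentially your steps (ii)–(iii) specialised to the symmetric case — but for general $p$ it does \emph{not} perturb off the exact operator; instead it re-runs the Douglas--Dupont--Wahlbin/Crouzeix--Thom\'ee machinery directly on $A_h$, proving in an appendix that $A_h^{-1}$ applied to a functional supported on a single cell decays exponentially away from that cell (the quadrature error being absorbed cell-block by cell-block via \cref{lem:quadrule} and inverse estimates). You instead take the $L^p$-stability of the exact $L^2$-projection as a black box, deduce the inf--sup bound $\Vert u_\sigma\Vert_{L^p}\lesssim\Vert Au_\sigma\Vert_{V_\sigma^{-n,p}}$ by the duality argument with $g=\vert u_\sigma\vert^{p-1}\mathrm{sgn}(u_\sigma)$ (and its endpoint variants), and close with a Neumann-series perturbation using $\Vert A-A_h\Vert\lesssim d$; the latter bound is exactly the paper's coercivity computation, generalised from $p=q=2$ to dual exponents via the product rule, H\"older, and \eqref{eqn:globinverseineq}. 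Your route is more modular and dispenses with the appendix entirely, at the price of invoking the uniform $L^q$-boundedness of $\Pi_\sigma$ for all $q\in[1,\infty]$ as an external result — which is legitimate here, since for tensor-product splines on a uniform Cartesian grid this is classical and is precisely the result the paper itself cites. The only points worth tightening are the endpoint cases $p\in\lbrace1,\infty\rbrace$ of your step (i) (for $p=1$ take $g=\mathrm{sgn}(u_\sigma)\in L^\infty$ directly; for $p=\infty$ an approximate point mass near the maximiser, pushed through $\Pi_\sigma$ with its uniform $L^1$-bound) and the observation that $d$ can be chosen independently of $p$ by treating $p=1,2,\infty$ and interpolating; neither affects the validity of the argument.
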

\begin{proof}
The proof for $p=2$ is illustrative. Using the quadrature error estimate
\Cref{eqn:quaderror} and inverse inequality~\eqref{eqn:globinverseineq}, one
obtains that the operator $A_h$ is coercive, i.\,e., for all $v_\sigma\in
V_\sigma^n(\Omega)$ has:
\begin{multline}
\langle A_h v_\sigma,v_\sigma\rangle = \langle A v_\sigma,v_\sigma\rangle -
\langle (A-A_h)v_\sigma,v_\sigma\rangle 
\stackrel{\eqref{eqn:quaderror}}{\geq}
\Vert v_\sigma\Vert_{L^2(\Omega)}^2 - Ch|v_\sigma^2|_{W^{1,1}(\Omega)} \\
\stackrel{\eqref{eqn:globinverseineq}}{\geq}
\left(1-\tilde{C}\frac{h}{\sigma}\right)\Vert v_\sigma\Vert_{L^2(\Omega)}^2 =
(1-\tilde{C}d)\Vert v_\sigma\Vert_{L^2(\Omega)}^2.
\end{multline}
For small enough $d$ the operator $A_h$ thus has an inverse that is bounded:
$\Vert A_h^{-1}\Vert_{V_\sigma^{-n,2}(\Omega)\to V_\sigma^{n,2}(\Omega)}\leq C$.
Note that $\tilde{C}$ neither depends on $h$ nor on $\sigma$, i.\,e., $d$ can
also be chosen independent of $h$ and $\sigma$.

The exact operator $A^{-1}$ is the $L^2(\Omega)$-projector onto $V_\sigma^n
(\Omega)$. Its boundedness for $1\leq p\leq\infty$ has been shown by Douglas,
Dupont, and Wahlbin\autocite{douglas1974}, as well as Crouzeix and Thomée.%
\autocite{crouzeix1987} The proof is technical, but with only minor modifications
to account for the quadrature error directly carries over to $A_h^{-1}$. These
modifications can be found in the appendix. Thus
$\Vert A_h^{-1}\Vert_{V_\sigma^{-n,p}(\Omega)\to V_\sigma^{n,p}(\Omega)}\leq C$,
$1\leq p\leq\infty$.
\end{proof}

As a direct corollary, one obtains that the system matrix corresponding
to $A_h$ in terms of the B-spline basis is not only sparse, but also
symmetric positive definite and well-conditioned. Using the conjugate
gradient method, the solution of~\eqref{eqn:neqapproach} can thus be
computed approximately at optimal time and space complexity $\bigO{N}$.
This stability result will allow us to establish convergence. Because
we take point evaluations of the initial data $u_0$, it is most natural
to consider the case $p=\infty$.

\begin{theorem}[$L^\infty(\Omega)$-Convergence]\label{thm:linfconvergence}
Let $u(\cdot,t)$, $t\in[0,T]$, denote the solution of the advection equation
\eqref{eqn:advequation} with continuous initial data $u_0$. Let $u_h(t)$ denote
the particle approximation from \cref{sec:particleapprox}. Then for $h=d\sigma$,
with $0<d<1$ independent of $\sigma$ but small enough, and $n\geq 2$ the
following error-bound holds for the solution of~\eqref{eqn:neqapproach}, i.\,e.,
for $u_{h,\sigma}(\cdot,t):=A_h^{-1}u_h$:
\begin{equation}
\Vert u_{h,\sigma}(\cdot,t) -  u(\cdot,t)\Vert_{L^\infty(\Omega)} \lesssim
\sigma^s\Vert u_0\Vert_{W^{s,\infty}(\Omega)}\qquad 0\leq s\leq n.
\end{equation}
\end{theorem}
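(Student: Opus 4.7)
The plan is to split the error via the spline quasi-interpolant $P_\sigma^n u(\cdot,t)$ of the transported solution:
\begin{equation*}
u_{h,\sigma}(\cdot,t) - u(\cdot,t) = \bigl[u_{h,\sigma}(\cdot,t) - P_\sigma^n u(\cdot,t)\bigr] + \bigl[P_\sigma^n u(\cdot,t) - u(\cdot,t)\bigr].
\end{equation*}
The second, approximation-theoretic term is handled directly by the interpolation estimate \eqref{eqn:interpolanterror} with $p=\infty$ combined with the transport stability \eqref{eqn:advstability}, producing a contribution of order $\sigma^s\Vert u_0\Vert_{W^{s,\infty}(\Omega)}$.

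For the first, discrete term, both $u_{h,\sigma}(\cdot,t)$ and $P_\sigma^n u(\cdot,t)$ lie in $V_\sigma^n(\Omega)$, so I would apply $A_h^{-1}$ together with the stability bound \eqref{eqn:stability} at $p=\infty$, reducing the problem to estimating $\Vert u_h - A_h P_\sigma^n u\Vert_{V_\sigma^{-n,\infty}(\Omega)}$. The decisive observation is that the particle values are \emph{exact values of $u$}: because the particles ride along the characteristics, $u(\vv{x}_i(t),t) = u_0(\vv{x}_i(0)) = u_i$. Hence for every test function $v_\sigma\in V_\sigma^{n,1}(\Omega)$ one has
\begin{equation*}
\bigl\langle u_h - A_h P_\sigma^n u,\, v_\sigma\bigr\rangle = \sum_{i=1}^{N} w_i\bigl(u(\cdot,t) - P_\sigma^n u(\cdot,t)\bigr)\bigl(\vv{x}_i(t)\bigr)\, v_\sigma\bigl(\vv{x}_i(t)\bigr).
\end{equation*}

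Factoring out the pointwise error $\Vert u(\cdot,t) - P_\sigma^n u(\cdot,t)\Vert_{L^\infty(\Omega)} \lesssim \sigma^s\Vert u_0\Vert_{W^{s,\infty}(\Omega)}$, the entire remaining task is to establish the uniform-in-time discrete $L^1$-type bound
\begin{equation*}
\sum_{i=1}^{N} w_i\bigl|v_\sigma\bigl(\vv{x}_i(t)\bigr)\bigr| \lesssim \Vert v_\sigma\Vert_{L^1(\Omega)}.
\end{equation*}
This last inequality is the main obstacle, since $|v_\sigma|$ is not a spline and \Cref{lem:quadrule} does not apply directly. My plan is a geometric counting argument in the spirit of the closing lines of the proof of \Cref{lem:quadrule}: I would group indices $i$ according to the coarse cube $Q_j^\sigma$ that contains $\vv{x}_i(t)$. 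Since $\traj{t}{0}$ is volume-preserving and Lipschitz, the preimage $\traj{t}{0}(Q_j^\sigma)$ has volume $\sigma^\spdim$ and meets at most $\mathcal{O}\bigl((\sigma/h)^\spdim\bigr)$ of the initial cells $Q_i^h$, so the total weight in each group is bounded by $C\sigma^\spdim$. Combining this with the local inverse estimate $\Vert v_\sigma\Vert_{L^\infty(Q_j^\sigma)}\lesssim \sigma^{-\spdim}\Vert v_\sigma\Vert_{L^1(\hat{Q}_j^\sigma)}$ and the bounded overlap of the enlarged cubes $\hat{Q}_j^\sigma$ then yields the desired bound. Concatenating the two summands gives the theorem.
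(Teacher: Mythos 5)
Your proposal is correct and follows essentially the same route as the paper: the same splitting through the quasi-interpolant $u_\sigma = P_\sigma^n u(\cdot,t)$ (the paper just writes the vanishing middle term $\Vert A_h^{-1}(u_\sigma)_h - u_\sigma\Vert_{L^\infty(\Omega)}=0$ explicitly), the same use of the exactness of the particle values $u_i = u(\vv{x}_i(t),t)$, the stability bound \eqref{eqn:stability} at $p=\infty$, and the same covering/counting argument for $\sum_i w_i\Vert v_\sigma\Vert_{L^\infty(\traj{0}{t}(Q_i^h))}\lesssim\Vert v_\sigma\Vert_{L^1(\Omega)}$. Your grouping of particles by the coarse cube containing $\vv{x}_i(t)$ is just the dual organisation of the paper's covering of each transported fine cell by the sets $K_i$, and both rest on the volume preservation and Lipschitz continuity of the flow map together with the local inverse inequality.
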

\begin{proof}
The key observation is the following. Let us for the moment assume that at the
current time~$t$ the exact solution $u(\cdot,t)$ of the advection equation would
be a spline: $u(\cdot,t)\in V_\sigma^n(\Omega)$. Noting that $u(\vv{x}_i(t),t) =
u_i$ exactly, we immediately see that $u_{h,\sigma}:= u(\cdot,t)$ solves
\eqref{eqn:neqapproach}. Because $A_h$ is invertible, this also is the only
solution, and in this case our approach yields the exact result.

In the general case we let $u_\sigma:=P_\sigma^n u(\cdot,t)$ denote the quasi-%
interpolant of $u$ onto $V_\sigma^n(\Omega)$. Furthermore, we set $(u_\sigma)_h
:=\sum_{i=1}^{N}w_iu_\sigma\bigl(\vv{x}_i(t)\bigr)\delta_{\vv{x}_i(t)}$ and
obtain:
\begin{multline}
\Vert u_{h,\sigma}-u(\cdot,t)\Vert_{L^\infty(\Omega)} = 
\Vert A_h^{-1}u_h - u\Vert_{L^\infty(\Omega)}\\\leq 
\Vert A_h^{-1}\bigl(u_h - (u_\sigma)_h\bigr)\Vert_{L^\infty(\Omega)} +
\underbrace{\Vert A_h^{-1}(u_\sigma)_h - u_\sigma\Vert_{L^\infty(\Omega)}}_{=0} +
\Vert u_\sigma - u(\cdot,t)\Vert_{L^\infty(\Omega)} \\
\stackrel{\eqref{eqn:stability}}{\lesssim}
\Vert u_h - (u_\sigma)_h\Vert_{V_{\sigma}^{-n,\infty}(\Omega)} +
\Vert u_\sigma - u(\cdot,t)\Vert_{L^\infty(\Omega)}.
\end{multline}
By \eqref{eqn:interpolanterror}, the last term can be bounded by $\sigma^s\Vert
u(\cdot,t)\Vert_{W^{s,\infty}(\Omega)}$, and the stability of the advection
equation~\eqref{eqn:advstability} furthermore yields $\Vert u(\cdot,t)\Vert_{W^{s,\infty}(\Omega)}
\lesssim \Vert u_0\Vert_{W^{s,\infty}(\Omega)}$. 

It remains to show that we also have $\Vert u_h-(u_\sigma)_h\Vert_{V_{\sigma}^%
{-n,\infty}(\Omega)}\lesssim\sigma^s\Vert u_0\Vert_{W^{s,\infty}(\Omega)}$. Thus
let $v_\sigma\in V_\sigma^n(\Omega)$ be arbitrary but fixed. Because we have
$u_i = u\bigl(\vv{x}_i(t),t\bigr)$ exactly, one obtains:
\begin{equation}
\sum_{i=1}^{N}w_i\left(u_i-u_\sigma\bigl(\vv{x}_i(t)\bigr)\right)v_\sigma\bigl(\vv{x}_i(t)\bigr)
\leq
\Vert u(\cdot,t)-u_\sigma\Vert_{L^\infty(\Omega)}\,\cdot\,%
\sum_{i=1}^{N}w_i\Vert v_\sigma\Vert_{L^\infty(\traj{0}{t}(Q_i^h))}.
\end{equation}
The sum can be bounded by $C\Vert v_\sigma\Vert_{L^1(\Omega)}$ using the same
techniques as in the proof of \Cref{lem:quadrule}, while for the first part
we already established $\Vert u(\cdot,t)-u_\sigma\Vert_{L^\infty(\Omega)}
\lesssim\sigma^s\Vert u_0\Vert_{W^{s,\infty}(\Omega)}$, and thus we in fact have
$\Vert u_h-(u_\sigma)_h\Vert_{V_{\sigma}^{-n,\infty}(\Omega)}\lesssim\sigma^s
\Vert u_0\Vert_{W^{s,\infty}(\Omega)}$ as desired.
\end{proof}

In order to show convergence in $L^p(\Omega)$ for $p\neq\infty$, one requires
inverse estimates. Let us thus consider the case where one initialises the
particle approximation using $P_\sigma^{n+1}u_0$ instead of $u_0$ itself. For the
initialisation we need to chose order $n+1$ as opposed to just $n$ to ensure that
we have $P_\sigma^{n+1}u_0\in W^{n,\infty}(\Omega)$.
\begin{theorem}[$L^p(\Omega)$-Convergence]
Let $u(\cdot,t)$, $t\in[0,T]$, denote the solution of the advection equation
\eqref{eqn:advequation} with initial data $u_0\in L^p(\Omega)$, $1\leq p\leq
\infty$. Let $u_h(t)$ denote the particle approximation from
\cref{sec:particleapprox}, but with $u_i := \bigl(P_\sigma^{n+1}u_0\bigr)
(\vv{x}_i(0))$, $i=1,\dotsc,N$, $n\geq 2$. Then for $h=d\sigma$, with $0<d<1$
independent of $\sigma$ but small enough the following error-bound holds for the
solution of~\eqref{eqn:neqapproach}, i.\,e., for $u_{h,\sigma}(\cdot,t):=
A_h^{-1}u_h$:
\begin{equation}
\Vert u_{h,\sigma}(\cdot,t) -  u(\cdot,t)\Vert_{L^p(\Omega)} \lesssim
\sigma^s\Vert u_0\Vert_{W^{s,p}(\Omega)}\qquad 0\leq s\leq n,\ p\in[1,\infty].
\end{equation}
\end{theorem}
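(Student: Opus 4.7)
The plan is to reduce to the setting of \Cref{thm:linfconvergence} by exploiting the regularisation effect of $P_\sigma^{n+1}$ applied to the initial data. Define $\tilde u_0 := P_\sigma^{n+1} u_0$, which is continuous and piecewise polynomial, hence in $W^{n,\infty}(\Omega)$. Let $\tilde u(\vv{x},t) := \tilde u_0\bigl(\traj{t}{0}(\vv{x})\bigr)$ be its exact advection. By construction the particle values satisfy $u_i = \tilde u_0(\vv{x}_i(0)) = \tilde u(\vv{x}_i(t),t)$ for every $t\in[0,T]$, so $u_h(t)$ is precisely the pointwise particle approximation of $\tilde u(\cdot,t)$. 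I split
\begin{equation*}
\Vert u_{h,\sigma}(\cdot,t) - u(\cdot,t)\Vert_{L^p(\Omega)} \leq
\Vert u_{h,\sigma}(\cdot,t) - \tilde u(\cdot,t)\Vert_{L^p(\Omega)} +
\Vert \tilde u(\cdot,t) - u(\cdot,t)\Vert_{L^p(\Omega)}.
\end{equation*}

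For the second term, $\tilde u - u$ solves the advection equation with initial datum $\tilde u_0 - u_0$, so the stability estimate~\eqref{eqn:advstability} and the approximation property~\eqref{eqn:interpolanterror} of $P_\sigma^{n+1}$ give $\Vert \tilde u(\cdot,t) - u(\cdot,t)\Vert_{L^p(\Omega)} \lesssim \Vert \tilde u_0 - u_0\Vert_{L^p(\Omega)} \lesssim \sigma^s |u_0|_{W^{s,p}(\Omega)}$ for $0\leq s\leq n+1$. For the first term I would replay the proof of \Cref{thm:linfconvergence} with the $L^\infty$-norm replaced everywhere by $L^p$, using $\tilde u_\sigma := P_\sigma^n \tilde u(\cdot,t)$ as the intermediate spline and relying on the fact that the stability bound~\eqref{eqn:stability} holds for all $p\in[1,\infty]$, so that no extra factor in $\sigma$ is introduced. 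The interpolation error $\Vert \tilde u_\sigma - \tilde u\Vert_{L^p(\Omega)}$ is bounded by combining~\eqref{eqn:interpolanterror}, the boundedness~\eqref{eqn:interpolantbound} of $P_\sigma^{n+1}$, and advection stability applied to $\tilde u$.

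The technical heart lies in estimating the remaining dual-norm quantity $\Vert u_h - (\tilde u_\sigma)_h\Vert_{V_\sigma^{-n,p}(\Omega)}$. By duality this is a supremum over $v_\sigma\in V_\sigma^{n,q}(\Omega)$, $1/p+1/q=1$, of the quadrature sum
\begin{equation*}
\sum_{i=1}^{N} w_i\bigl(\tilde u(\vv{x}_i(t),t) - \tilde u_\sigma(\vv{x}_i(t))\bigr)\,v_\sigma(\vv{x}_i(t)).
\end{equation*}
The naïve strategy from \Cref{thm:linfconvergence}---pulling out an $L^\infty$-bound on $e := \tilde u - \tilde u_\sigma$---would cost a factor $\sigma^{-\spdim/p}$ via inverse estimates and spoil the rate. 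My plan is instead to apply the discrete Hölder inequality at the particle locations: the dual discrete $L^q$-sum of $v_\sigma$ is bounded by $\Vert v_\sigma\Vert_{L^q(\Omega)}$ using the covering-by-$\sigma$-cubes argument from the proof of \Cref{lem:quadrule}, while the discrete $L^p$-sum of $e$ at the particle locations is compared to the continuous norm $\Vert e\Vert_{L^p(\Omega)}$ via a Riemann-sum-type estimate over the volume-preserving images $\traj{0}{t}(Q_i^h)$, exploiting the Lipschitz regularity of the flow and the $C^{n-2}$-regularity of $e$. Once this is in place, \eqref{eqn:interpolanterror} gives $\Vert e\Vert_{L^p(\Omega)} \lesssim \sigma^s |\tilde u|_{W^{s,p}(\Omega)}$ and \eqref{eqn:interpolantbound} together with advection stability bound the latter by $\sigma^s |u_0|_{W^{s,p}(\Omega)}$, closing the argument. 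The main obstacle is precisely the discrete-to-continuous $L^p$-comparison for $e$ without losing powers of $\sigma$; this is where the extra smoothing built into $\tilde u_0 = P_\sigma^{n+1} u_0$---one order higher than the rate being proved---pays off.
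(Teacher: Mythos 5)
Your overall architecture coincides with the paper's: the same splitting via $\tilde u$, the same stability and quasi-interpolation bounds for the two easy terms, and the same H\"older split of the dual pairing with the $v_\sigma$-factor controlled by the covering argument from \Cref{lem:quadrule}. The one step where the proof actually has to earn the $L^p$ rate, however, is exactly the step you defer: the bound on the weighted discrete $\ell^p$ sum of $e=\tilde u-\tilde u_\sigma$. Your proposed \enquote{Riemann-sum-type} comparison of $\sum_i w_i|e(\vv{x}_i(t))|^p$ with $\Vert e\Vert_{L^p(\Omega)}^p$ cannot hold without an additive quadrature-error term (the particles may sit at the extrema of $e$), and controlling that term in the spirit of \Cref{lem:quadrule} requires something like $h\,\bigl|\,|e|^p\bigr|_{W^{1,1}(\Omega)}\lesssim h\,\Vert e\Vert_{L^p(\Omega)}^{p-1}\Vert\nabla e\Vert_{L^p(\Omega)}$, hence a derivative approximation estimate $\Vert\nabla(\tilde u-P_\sigma^n\tilde u)\Vert_{L^p}\lesssim\sigma^{s-1}\Vert\tilde u\Vert_{W^{s,p}}$ that the paper deliberately does not state (it restricts \eqref{eqn:interpolanterror} to $L^p$-norms), plus an extension of \Cref{lem:quadrule} from spline products to the non-spline function $|e|^p\circ\traj{0}{t}$. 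This is probably salvageable---with $h=d\sigma$ the resulting error term scales like $d\,\sigma^{sp}$---but as written it is a plan, not a proof.

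The paper closes this step by a different mechanism that you should compare against: it keeps the local $L^\infty$ norms $\Vert e\Vert_{L^\infty(\traj{0}{t}(Q_i^h))}$, applies \eqref{eqn:interpolanterror} in $L^\infty$ to get $\sigma^s\Vert\tilde u\Vert_{W^{s,\infty}(\hat K_i)}$, transports each patch back to time $0$ where $\tilde u$ becomes the \emph{spline} $P_\sigma^{n+1}u_0$, covers the images by unions $L_i$ of $\sigma$-cubes overlapping about $(\sigma/h)^\spdim$ times, and then applies the spline inverse estimate \eqref{eqn:globinverseineq} to trade $W^{s,\infty}(L_i)$ for $\sigma^{-\spdim/p}W^{s,p}(L_i)$; this factor exactly cancels the $h^{\spdim/p}$ coming from the weights. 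That also pins down precisely why order $n+1$ is needed in the initialisation: the inverse estimate on $V_\sigma^{n+1}$ is valid up to $s\le n$, which is the full range claimed in the theorem---not, as you suggest, because of a discrete-to-continuous comparison. In short: right skeleton, right identification of the obstacle, but the decisive estimate is missing, and your sketched substitute needs ingredients the paper does not provide.
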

\begin{proof}
Let $\tilde{u}(\cdot,t)$ denote the solution of the advection equation with
$u_0$ replaced by $P_\sigma^{n+1}u_0$. We then have:
\begin{equation}
\Vert u(\cdot,t) - \tilde{u}(\cdot,t)\Vert_{L^p(\Omega)}
\stackrel{\eqref{eqn:advstability}}{\lesssim}
\Vert u_0 - P_\sigma^{n+1}u_0\Vert_{L^p(\Omega)} 
\stackrel{\eqref{eqn:interpolanterror}}{\lesssim} \sigma^s\Vert u_0\Vert_{W^{s,p}(\Omega)}
\qquad 0\leq s\leq n+1.
\end{equation}
Therefore, the convergence order of the method does not deteriorate by replacing the
initial data with its spline approximation. In complete analogy to the proof of
\Cref{thm:linfconvergence}, one obtains:
\begin{equation}
\Vert u_{h,\sigma}-\tilde{u}(\cdot,t)\Vert_{L^p(\Omega)} \lesssim
\Vert u_h-(u_\sigma)_h\Vert_{V_\sigma^{-n,p}(\Omega)} +
\Vert u_\sigma - \tilde{u}(\cdot,t)\Vert_{L^p(\Omega)},
\end{equation}
where the last term may be again bounded as desired:
\begin{equation}
\Vert u_\sigma - \tilde{u}(\cdot,t)\Vert_{L^p(\Omega)}
\stackrel{\eqref{eqn:interpolanterror}}{\lesssim}
\sigma^s\Vert\tilde{u}(\cdot,t)\Vert_{W^{s,p}(\Omega)}
\stackrel{\eqref{eqn:advstability}}{\lesssim}
\sigma^s\Vert P_\sigma^{n+1}u_0\Vert_{W^{s,p}(\Omega)}
\stackrel{\eqref{eqn:interpolantbound}}{\lesssim}
\sigma^s\Vert u_0\Vert_{W^{s,p}(\Omega)},\qquad  0\leq s\leq n.
\end{equation}
For the remaining term one obtains for arbitrary $v_\sigma\in V_\sigma^n(\Omega)$
using Hölder's inequality for $\tfrac{1}{p}+\tfrac{1}{q}=1$ and the usual
modifications for $p=\infty$ or $q=\infty$:
\begin{equation}
\sum_{i=1}^{N}w_i\left(u_i-u_\sigma\bigl(\vv{x}_i(t)\bigr)\right)v_\sigma\bigl(\vv{x}_i(t)\bigr)
\leq \left(\sum_{i=1}^{N}w_i\Vert\tilde{u}(\cdot,t)-u_\sigma\Vert_{L^\infty(\traj{0}{t}(Q_i^h))}^p\right)^{\frac{1}{p}}
\left(\sum_{i=1}^{N}w_i\Vert v_\sigma\Vert_{L^\infty(\traj{0}{t}(Q_i^h))}^q\right)^{\frac{1}{q}}.
\end{equation}
Now, using $K_i$ from the proof of~\Cref{lem:quadrule}, the smoothness and
boundedness of $\traj{0}{t}$ and its inverse $\traj{t}{0}$, the first sum may be
bounded as follows:
\begin{multline}
\left(\sum_{i=1}^{N}w_i\Vert\tilde{u}(\cdot,t)-u_\sigma\Vert_{L^\infty(\traj{0}{t}(Q_i^h))}^p\right)^{\frac{1}{p}}
\leq
h^\frac{\spdim}{p}\left(\sum_{i=1}^{N}\Vert\tilde{u}(\cdot,t)-u_\sigma\Vert_{L^\infty(K_i)}^p\right)^{\frac{1}{p}}\\
\stackrel{\eqref{eqn:interpolanterror}}{\lesssim}
h^\frac{\spdim}{p}\sigma^s \left(\sum_{i=1}^{N}\Vert\tilde{u}(\cdot,t)\Vert_{W^{s,\infty}(\hat{K}_i)}^p\right)^{\frac{1}{p}}
\lesssim
h^\frac{\spdim}{p}\sigma^s \left(\sum_{i=1}^{N}\Vert P_\sigma^{n+1}u_0\Vert_{W^{s,\infty}(\traj{t}{0}(\hat{K}_i))}^p\right)^{\frac{1}{p}}.
\end{multline}
Analogously to the $K_i$, we may find a bounded number of cubes $Q_i^\sigma$ that
covers $\traj{t}{0}(\hat{K}_i)$, such that their union $L_i$ fulfils $\meas_\spdim(L_i)
\lesssim\sigma^\spdim$. Moreover, the $L_i$ cover $\Omega$ about
$\left(\tfrac{\sigma}{h}\right)^\spdim$ times. Thus we obtain using inverse
estimate~\eqref{eqn:globinverseineq}:
\begin{multline}
h^\frac{\spdim}{p}\sigma^s \left(\sum_{i=1}^{N}\Vert P_\sigma^{n+1}u_0\Vert_{W^{s,\infty}(\traj{t}{0}(\hat{K}_i))}^p\right)^{\frac{1}{p}}
\leq
h^\frac{\spdim}{p}\sigma^s \left(\sum_{i=1}^{N}\Vert P_\sigma^{n+1}u_0\Vert_{W^{s,\infty}(L_i)}^p\right)^{\frac{1}{p}} \\
\stackrel{\eqref{eqn:globinverseineq}}{\lesssim}
\left(\frac{h}{\sigma}\right)^\frac{\spdim}{p}\sigma^s
\left(\sum_{i=1}^{N}\Vert P_\sigma^{n+1}u_0\Vert_{W^{s,p}(L_i)}^p\right)^{\frac{1}{p}}
\lesssim
\left(\frac{h}{\sigma}\right)^\frac{\spdim}{p}\left(\frac{\sigma}{h}\right)^\frac{\spdim}{p}\sigma^s
\Vert P_\sigma^{n+1}u_0\Vert_{W^{s,p}(\Omega)}
\stackrel{\eqref{eqn:interpolantbound}}{\lesssim}
\sigma^s\Vert u_0\Vert_{W^{s,p}(\Omega)}.
\end{multline}
Similarly, we obtain:
\begin{equation}
\left(\sum_{i=1}^{N}w_i\Vert v_\sigma\Vert_{L^\infty(\traj{0}{t}(Q_i^h))}^q\right)^{\frac{1}{q}}
\lesssim \Vert v_\sigma\Vert_{L^q(\Omega)},
\end{equation}
and thus $\Vert u_h-(u_\sigma)_h\Vert_{V_\sigma^{-n,p}(\Omega)}\lesssim
\sigma^s\Vert u_0\Vert_{W^{s,p}(\Omega)}$.
\end{proof}

\subsection{Relation to Blob-Methods}
The regularisation by projection approach may be called a Particle-in-Cell
scheme, because of the presence of an underlying grid. However, the approach
also corresponds to a classic blob-based method with a specially chosen
blob-function $\zeta_\sigma(\vv{x},\vv{y})$. In fact, for each $\vv{y}\in\Omega$,
let us define the functions $\zeta_\sigma(\cdot,\vv{y})$ as $A^{-1}\delta_\vv{y}$
and $\zeta_{h,\sigma}(\cdot,\vv{y}):=A_h^{-1}\delta_\vv{y}$. We then have
$A^{-1}u_h =\sum_{i=1}^{N}w_iu_i\zeta_\sigma(\vv{x},\vv{x}_i)$ and $A_h^{-1}u_h =
\sum_{i=1}^{N}w_iu_i\zeta_{h,\sigma}(\vv{x},\vv{x}_i)$, respectively. 

The projection approaches are thus in fact blob-based methods. Both
$\zeta_\sigma(\cdot,\vv{y})$ and $\zeta_{h,\sigma}(\cdot,\vv{y})$ are 
decaying at an exponential rate away from $\vv{y}$, just like conventional
blob-functions. Moreover, $\zeta_\sigma$ fulfils the moment conditions:
\begin{equation}
\idx{\Omega}{}{\zeta_\sigma(\vv{x},\vv{y})\vv{x}^\alpha}{\vv{x}} =
\langle AA^{-1}\delta_\vv{y},\vv{x}^\alpha\rangle =
\vv{x}^\alpha|_{\vv{x}=\vv{y}}\qquad \forall|\alpha| < n,
\end{equation}
while $\zeta_{h,\sigma}$ fulfils \emph{discrete moment conditions}:
\begin{equation}
\sum_{i=1}^{N}w_i\zeta_{h,\sigma}(\vv{x}_i,\vv{y})\vv{x}_i^\alpha =
\langle A_hA_h^{-1}\delta_\vv{y},\vv{x}^\alpha\rangle =
\vv{x}^\alpha|_{\vv{x}=\vv{y}}\qquad \forall|\alpha|< n.
\end{equation}

The fact that these special blob-functions exist means that other techniques
developed for blob-based approaches can also be applied in the current setting.
Let us for example consider viscous effects with viscosity $\nu>0$. Interestingly,
for $\zeta_{h,\sigma}$, both Fishelov's scheme\autocite{fishelov1990} and the
method of particle strength exchange\autocite{degond1989} coincide and reduce to:
\begin{equation}
\left\lbrace
\begin{aligned}
\ddx{\vv{x}_i}{t}(t) &= \vv{a}\bigl(\vv{x}_i(t),t\bigr),\\
\ddx{u_i}{t}(t) &= \nu\Delta u_{h,\sigma}\bigl(\vv{x}_i(t)\bigr),
\end{aligned}
\right.\qquad i=1,\dotsc,N.
\end{equation}

\section{Numerical Experiments}\label{sec:experiments}
In this section we will consider four different types of numerical experiments
with (vortex) particle methods. In the first example we consider a low order
computation on a two-dimensional benchmark that has been used before to visualise
the necessity of remeshing in classical particle methods.\autocite{koumoutsakos2008}
Our experiment will show that by using $A_h^{-1}$ no remeshing is necessary.

The second experiment similarly is of graphical nature: we apply the particle
method to the problem of interface tracking using a level-set function. This
function evolves over time according to the linear advection equation and thus
perfectly fits into the framework considered here. The results highlight the
absence of numerical diffusion in the scheme.

For the third series of experiments we developed a solver for the two-dimensional
domain $\Omega = (0,1)^2$ with periodic boundary conditions. We perform high-%
order, long-term simulations of a quasi-steady, but highly instable flow. Due to
its instability, this flow is notoriously hard to accurately reproduce in long-%
term simulations. We compare the vortex method to a state-of-the art flow solver:%
\autocite{schroeder2018} an eighth order, exactly divergence-free, hybridised
discontinuous Galerkin (HDG8) method. The results show the vortex method to be
competitive.

Finally, the fourth series of experiments is a convergence study on a fully three-%
dimensional flow-problem: the Arnold--Beltrami--Childress (ABC) flow. Despite the
additional vortex stretching term in three-dimensional space, the vortex method
remained stable.

\subsection{Graphical Demonstration}
Koumoutsakos, Cottet, and Rossinelli\autocite{koumoutsakos2008} describe the
following benchmark case in two dimensions in order to illustrate the necessity
of remeshing. Let us consider the two-dimensional, incompressible Euler
equations in their vorticity formulation in the whole-space:
\begin{equation}
\pdx{\omega}{t} + (\vv{u}\cdot\nabla)\omega = 0.
\end{equation}
Here the advected quantity is the vorticity. Following the fluid mechanics
convention, it is labelled  $\omega$ instead of $u$, while the velocity is
denoted by $\vv{u}$ instead of $\vv{a}$. It is computed from $\omega$ via:
\begin{align}\label{eqn:streamfct2d}
\psi(\vv{x}) &:= G\star\omega := \frac{1}{2\pi}\idx{\reals^2}{}{%
\ln\left(\frac{1}{|\vv{x}-\vv{y}|}\right)\omega(\vv{y})}{\vv{y}}, \\
\vv{u}\bigl(\vv{x}) &:= \mathrm{curl}\,\psi :=
\left(\pdx{\psi}{x_2}(\vv{x}),-\pdx{\psi}{x_1}(\vv{x})\right)^\top.
\end{align}
A steady solution of this equation is given by:
\begin{equation}
\omega(\vv{x},t) = 100\,\max\lbrace1-2|\vv{x}|,0\rbrace.
\end{equation}
Note that the vorticity is compactly supported, while the velocity has global
support. The streamlines corresponding to $\vv{u}$ are concentric circles around
the origin.

The task is now to construct a vortex particle method that reproduces this
result over extended periods of time. We will artificially restrict ourselves
to the domain $\Omega:=(-1,1)^2$, which contains the entire support of $\omega$.
Due to the circular motion particles will inevitably leave this domain. For this
reason our particle field $\omega_h(0)$ will be created and tracked on the
slightly larger domain $\Xi:=(-2,2)^2$ as described in \cref{sec:particleapprox}.
We will use $\sigma = 0.01$, $d=0.5$, $h=0.005$. At any time $t$, only the particles
that are currently located inside of $\Omega$ will be considered for computing
$\omega_{h,\sigma} := A_h^{-1}\omega_h\in V_\sigma^n(\Omega)$. We choose $n=2$:
it does not make sense to chose higher orders due to the low regularity of the
exact solution.

In order to compute the velocity field, $\omega_{h,\sigma}$ is extended with
zero outside of $\Omega$ and inserted into equation \eqref{eqn:streamfct2d}.
The integral can be evaluated analytically: $\omega_{h,\sigma}$ is a piece-wise
polynomial on a Cartesian grid. For a faster evaluation, however, we instead
compute $\psi_{h,\sigma}$, the $L^2(\Xi)$-projection of $G\star\omega_{h,\sigma}$
onto $V_\sigma^{n+2}(\Xi)$. The computation of this projection can be accelerated
by a fast multipole method. The resulting function $\vv{u}_{h,\sigma} :=
\mathrm{curl}\,\psi_{h,\sigma}$ is supported on all of $\Xi$ and by construction
exactly divergence-free. It is used to convect \emph{all particles}---also
those outside of $\Omega$---according to $\dot{\vv{x}}_i(t) =
\vv{u}_{h,\sigma}\bigl(\vv{x}_i(t)\bigr)$. These ODEs are discretised using the
classical Runge--Kutta method and a fixed time-step of $\Delta t = 0.005$.

\begin{figure}
\centering
\begin{subfigure}{0.3\textwidth}
\includegraphics[width=\textwidth]{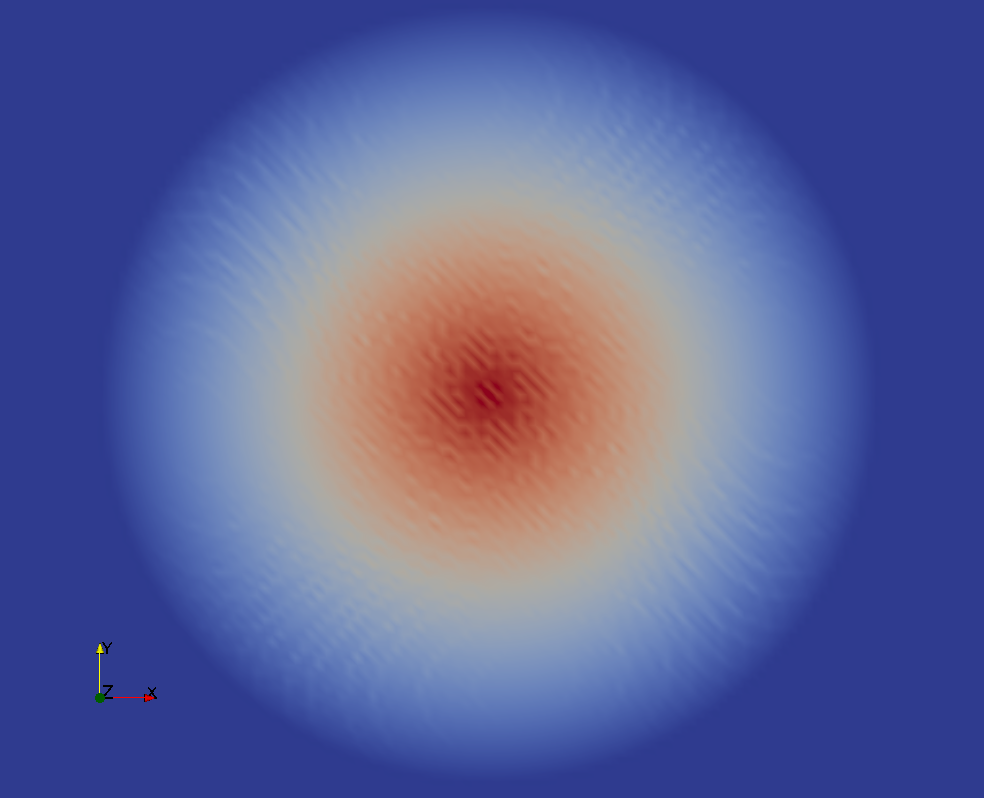}
\end{subfigure}
\hfil %
\begin{subfigure}{0.3\textwidth}
\includegraphics[width=\textwidth]{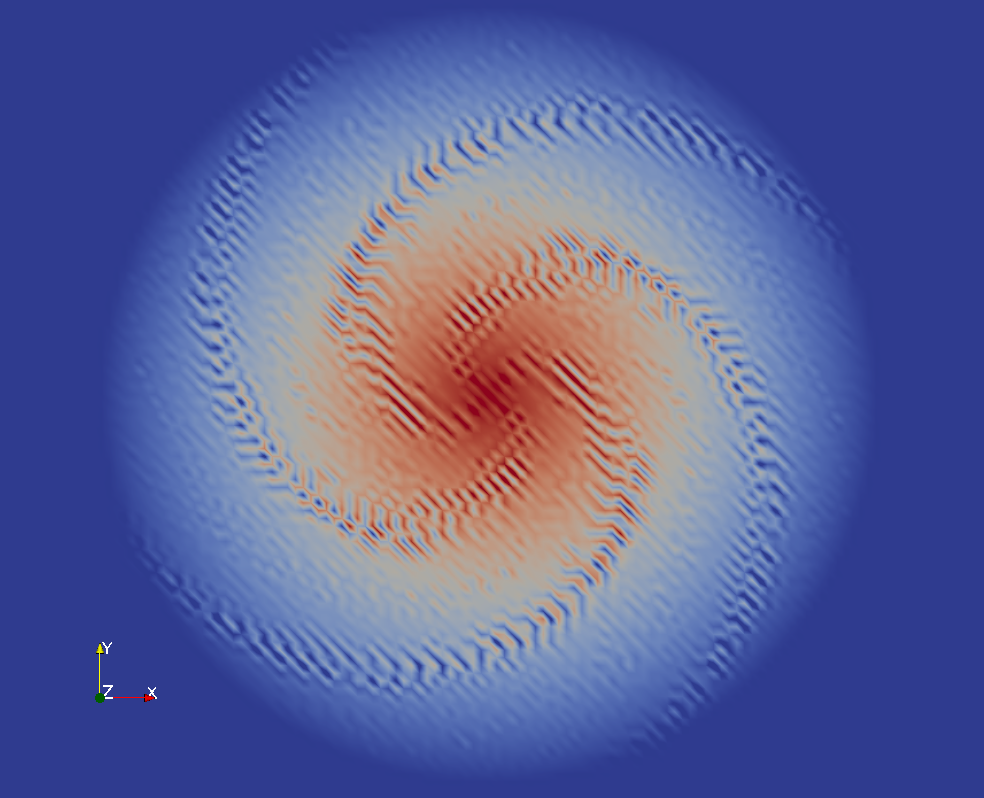}
\end{subfigure}
\hfil %
\begin{subfigure}{0.3\textwidth}
\includegraphics[width=\textwidth]{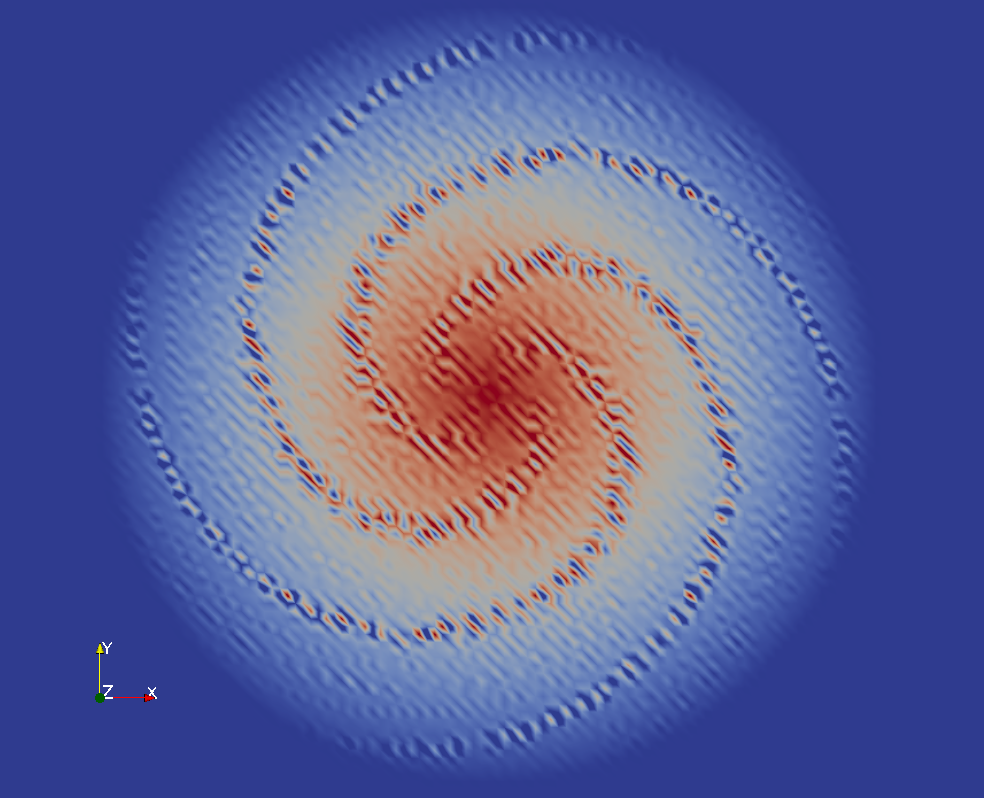}
\end{subfigure}
\\[0.5cm]
\begin{subfigure}{0.3\textwidth}
\includegraphics[width=\textwidth]{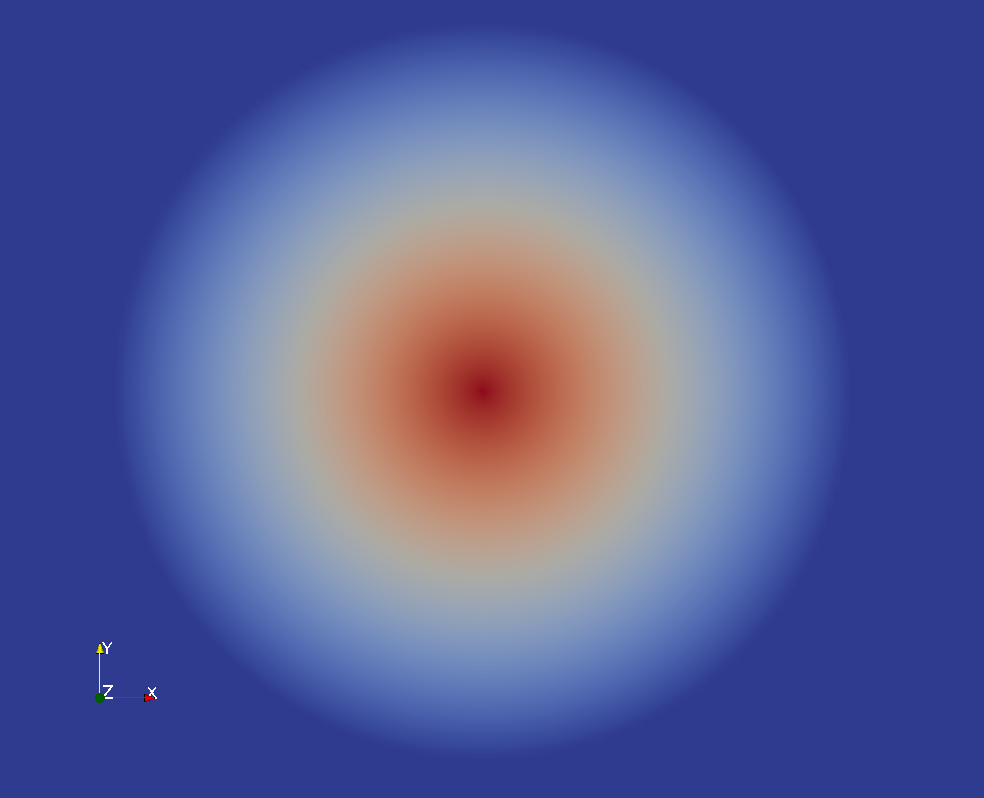}
\subcaption*{$t=0.01$}
\end{subfigure}
\hfil %
\begin{subfigure}{0.3\textwidth}
\includegraphics[width=\textwidth]{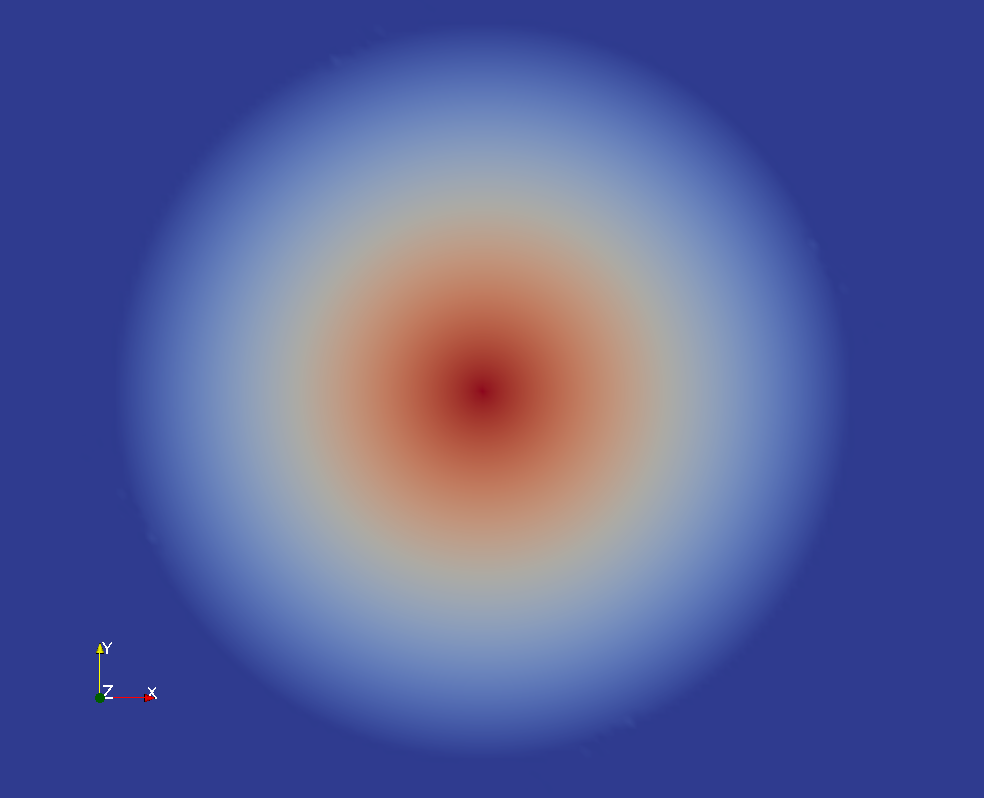}
\subcaption*{$t=0.10$}
\end{subfigure}
\hfil %
\begin{subfigure}{0.3\textwidth}
\includegraphics[width=\textwidth]{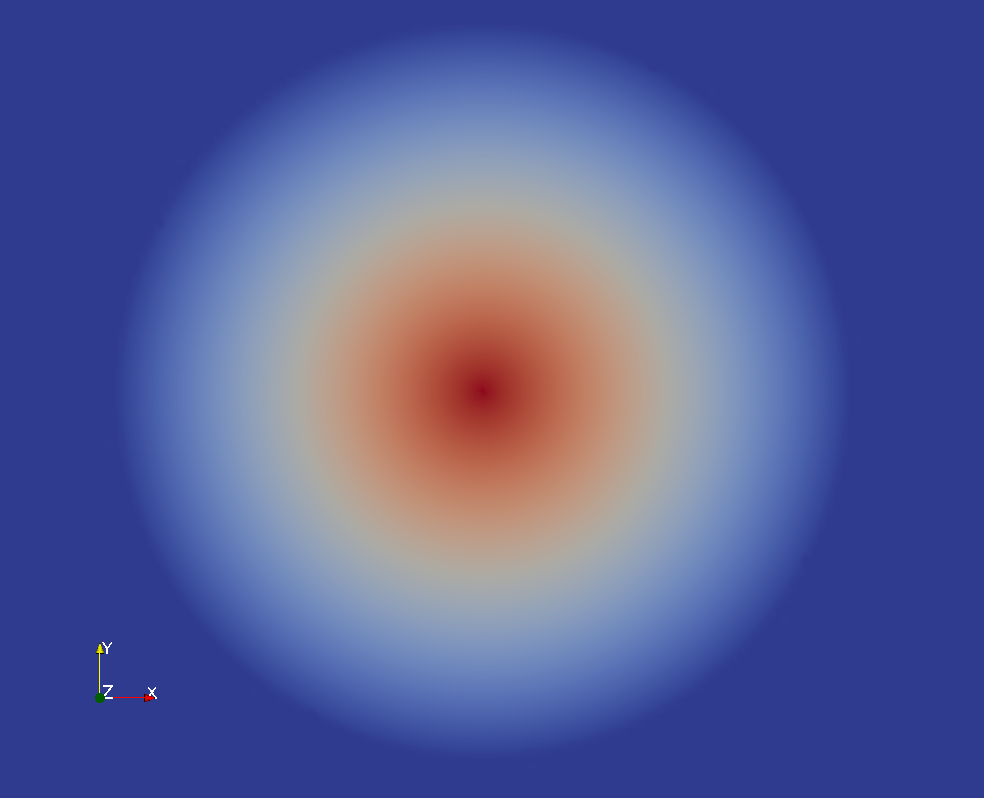}
\subcaption*{$t=0.15$}
\end{subfigure}
\caption{\label{fig:koumoutsakos-bench}A comparison of $A^{-1}\omega_h$ in
the first row and $A^{-1}_h\omega_h$ in the second. The first row corresponds
to a classic vortex blob-method. Artefacts already appear in the beginning of
the simulation and worsen over time. The second row shows the results of the
new scheme which remain virtually unchanged over time as desired.}
\end{figure}

The results of this experiment are depicted in \Cref{fig:koumoutsakos-bench}.
In the first row the simulation was carried out with $A^{-1}u_h$ instead of
$\omega_{h,\sigma}=A^{-1}_h\omega_h$. The first row clearly shows that the error
quickly increases with time $t$. The reader is invited to compare this picture
with those of Koumoutsakos, Cottet, and Rossinelli\autocite{koumoutsakos2008}.
On the other hand, the solution using $A_h^{-1}\omega_h$ remained accurate,
despite the complete absence of any remeshing.

\subsection{Application to Zalesak's Disk}
A great advantage of particle methods when applied to the advection equation is
their complete lack of numerical diffusion. This makes them particularly interesting
for interface tracking using the level-set method. A long established benchmark
problem in the field is Zalesak's disk, in which the evolution of a slitted disk
subject to a rigid body rotation is tracked over time.~\autocite{zalesak1979}
The core difficulty here is to maintain the sharp kinks and corners of this domain:
in many conventional schemes the corners quickly get smeared out.

We consider Zalesak's disk on the domain $\Omega:=[-0.5,0.5]^2$ and the time
interval $t\in[0,628]$. The quantity of interest here is the signed distance
function $u$, whose initial data $u_0$ is given as an algorithm in the appendix.
This function evolves over time according to the advection equation~\eqref{eqn:advequation}
with a given velocity field. In this test-case we thus follow the notation of
\Cref{sec:particlemethods} and give the velocity field as:
\begin{equation}
\vv{a}(x,y) := \frac{2\pi}{628}\begin{pmatrix}-y\\x\end{pmatrix}.
\end{equation}

The discretisation is analogous to the previous subsection, with regularisation
taking place on $\Omega$, but particles being tracked on $\Xi := [-1,1]^2 \supset
\Omega$, with $n=2$, $\sigma = 0.01$, $d=\tfrac{1}{2}$, $h=0.005$, $\Delta t = 1$,
where the classical Runge--Kutta method is used as a time-stepping scheme. The
results together with the contour line of $u=0$ are depicted in \Cref{fig:zalesak}.
It can clearly be seen that the interface remains well-maintained, and does not
degenerate over time. Due to the Lagrangian nature of the method, the results for
$t=0$ and $t=628$ are identical, so this experiment can be extended to arbitrary
time intervals.

\begin{figure}
\centering
\begin{tabular}{ccc}
\includegraphics[width=0.3\textwidth]{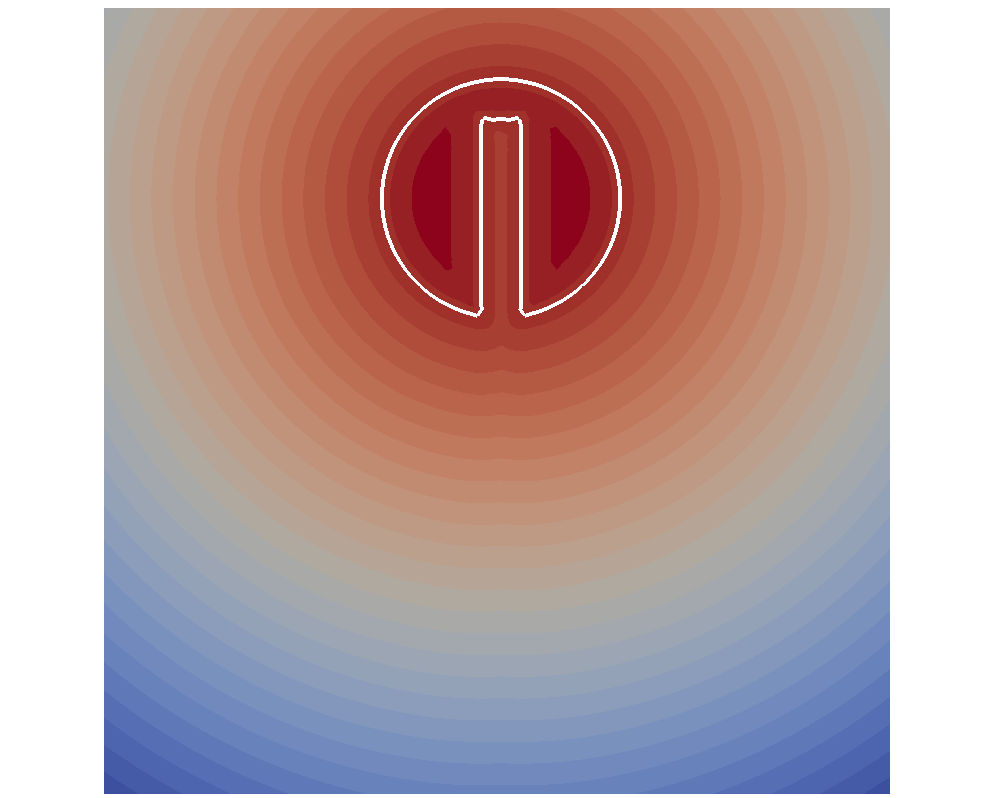} &
\includegraphics[width=0.3\textwidth]{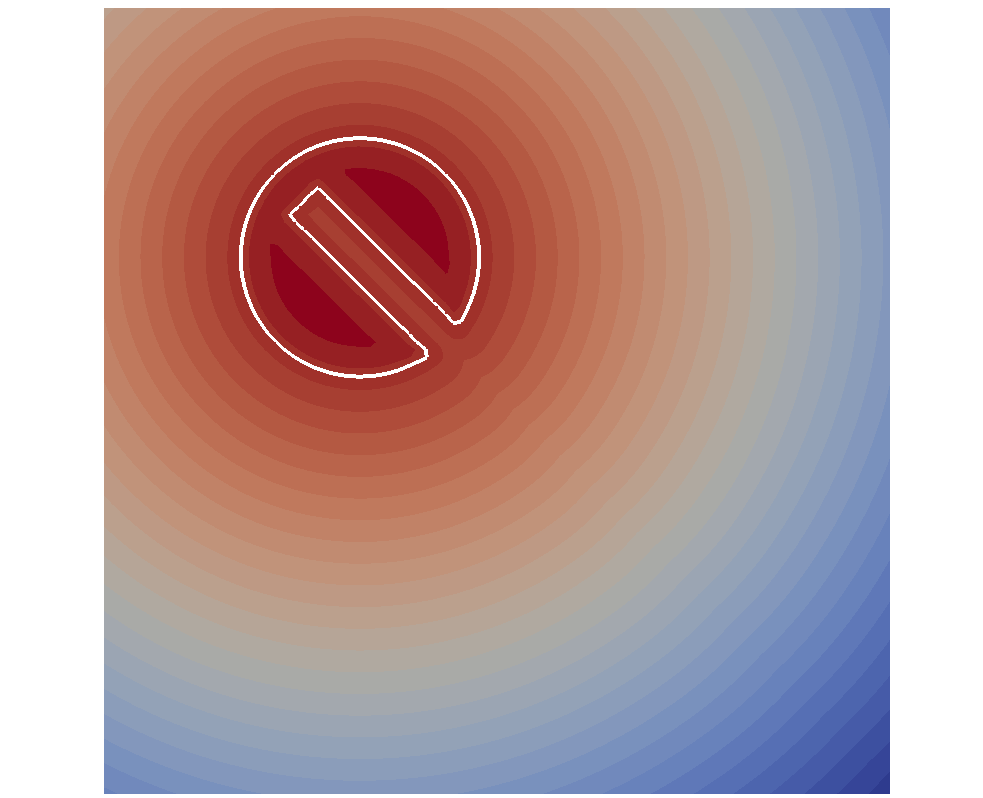} &
\includegraphics[width=0.3\textwidth]{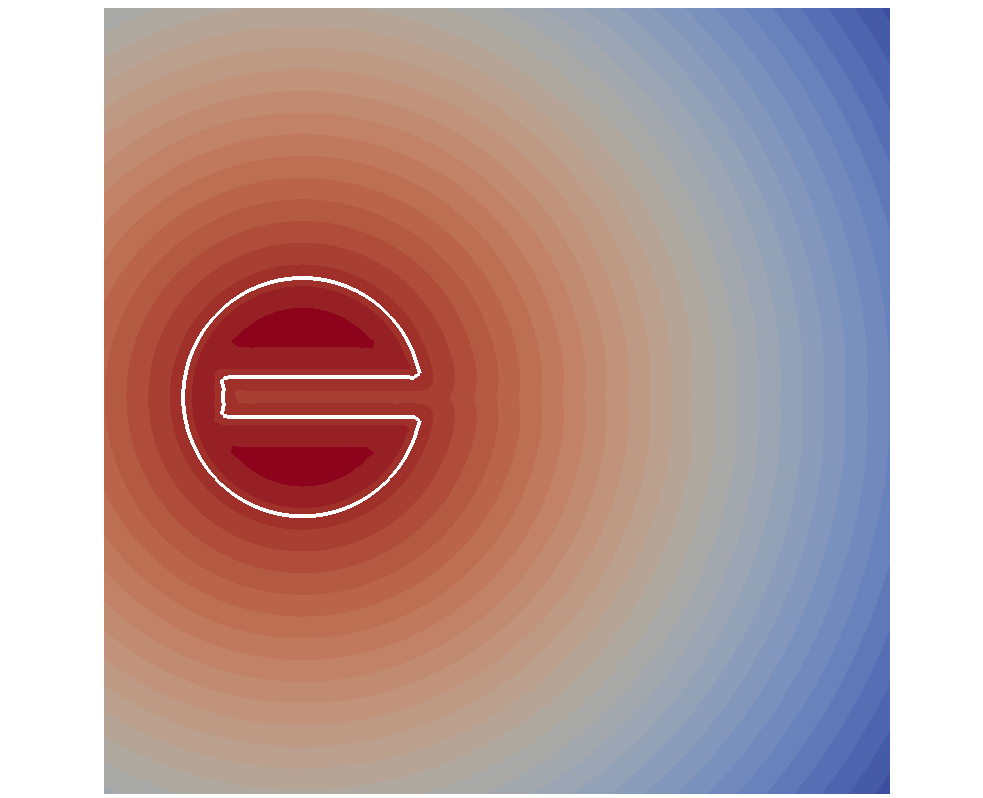} \\[0.5cm]
\includegraphics[width=0.3\textwidth]{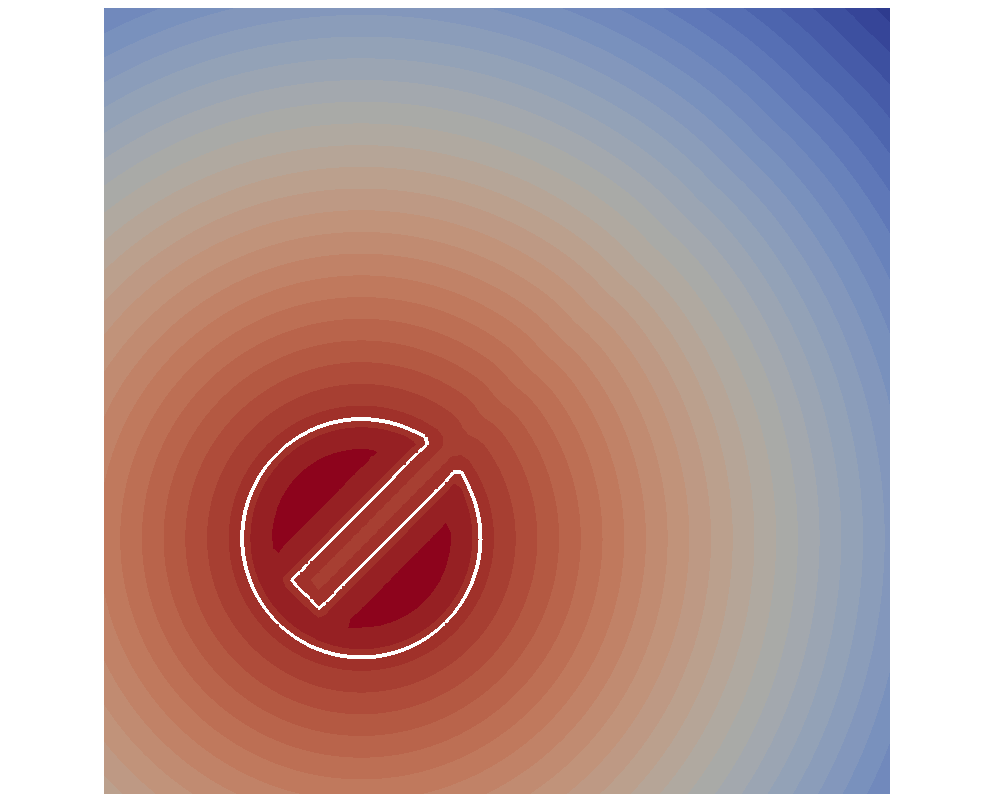} &
\includegraphics[width=0.3\textwidth]{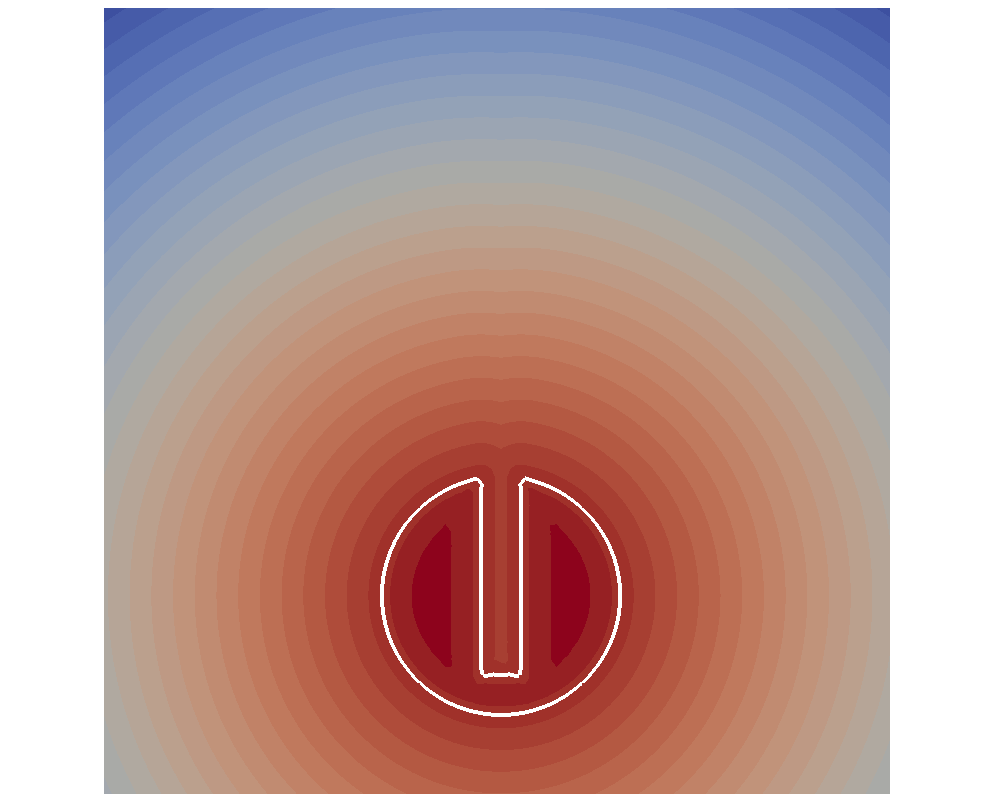} &
\includegraphics[width=0.3\textwidth]{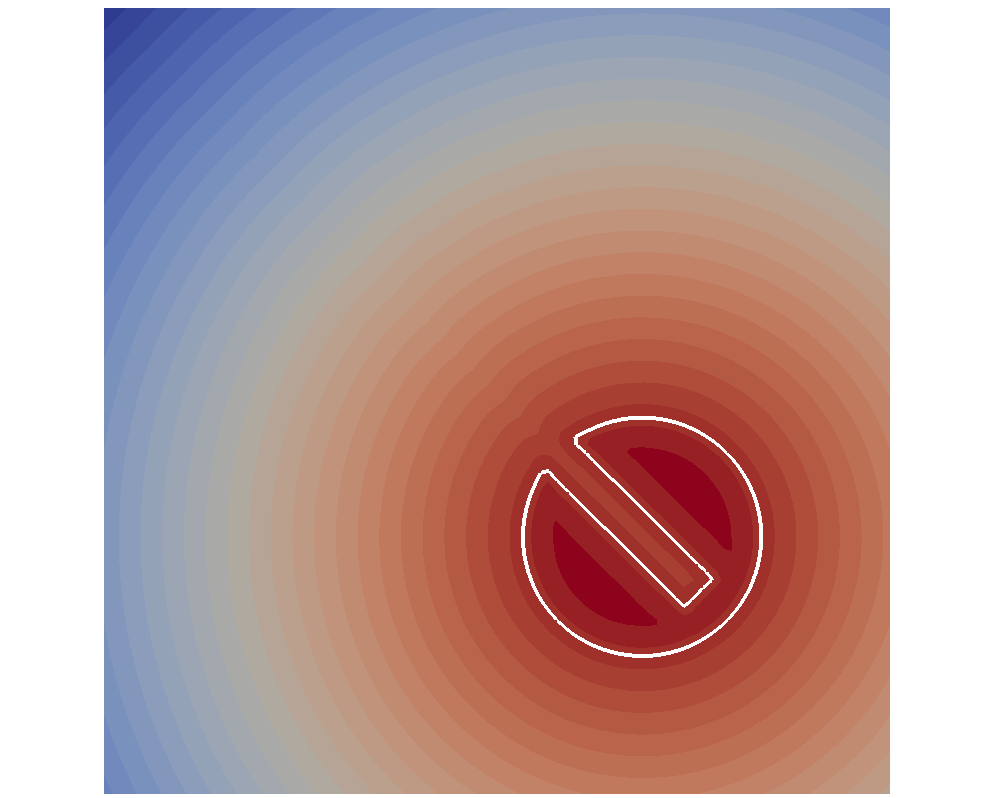} \\[0.5cm]
\includegraphics[width=0.3\textwidth]{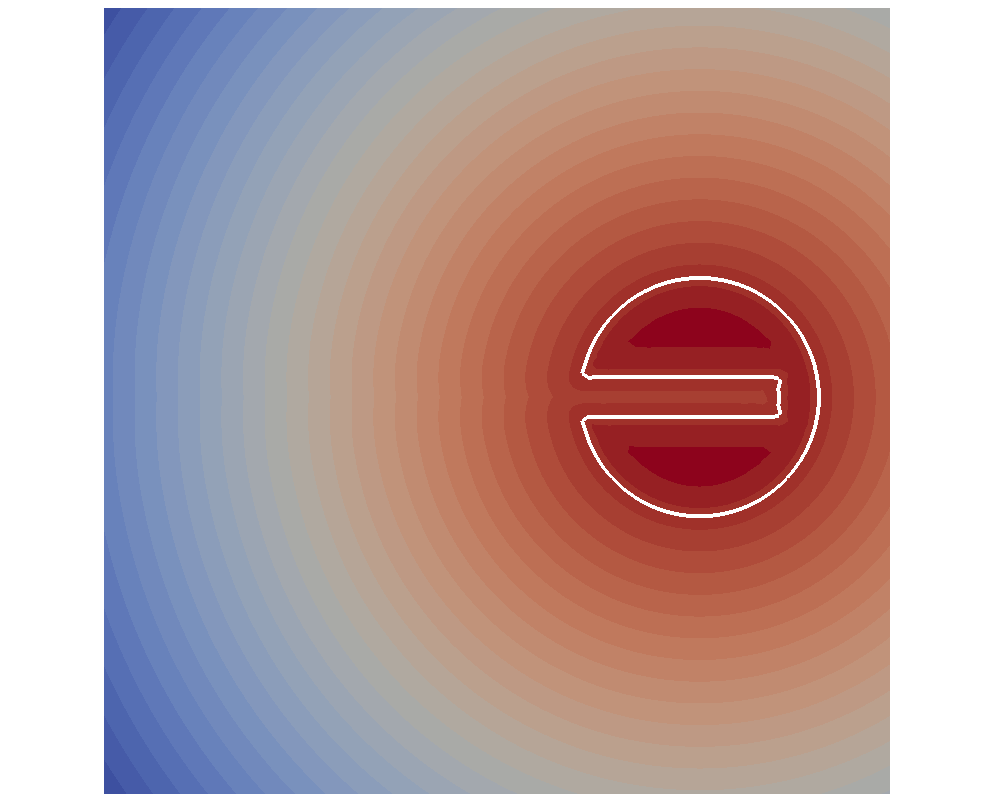} &
\includegraphics[width=0.3\textwidth]{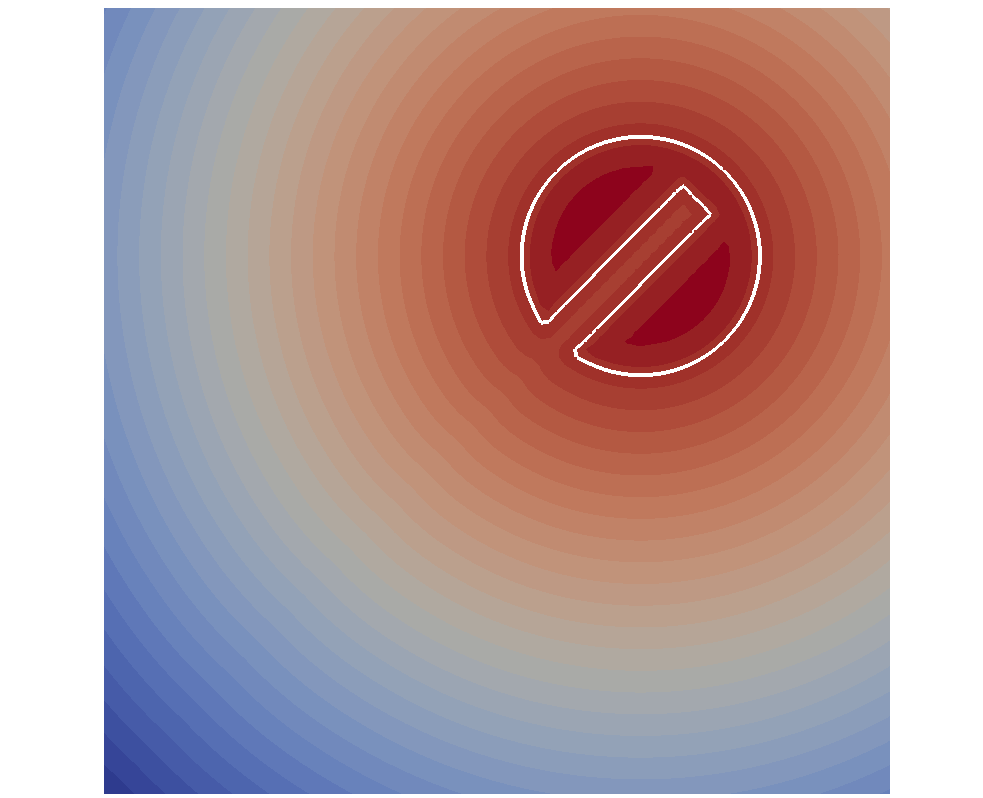} &
\includegraphics[width=0.3\textwidth]{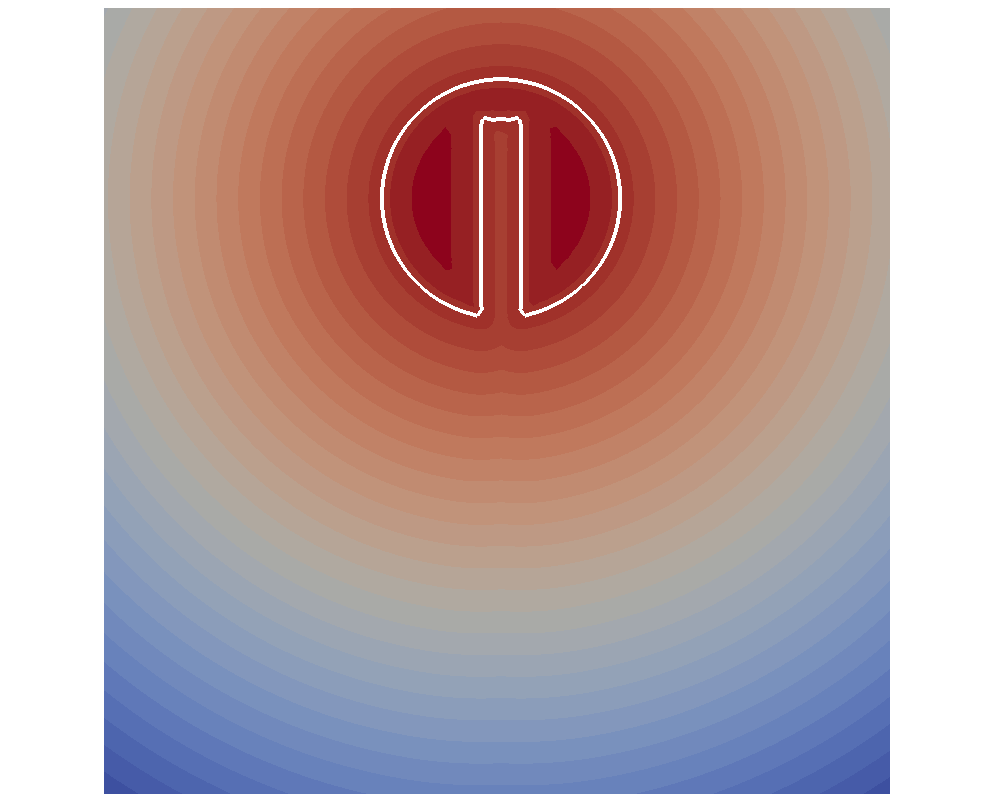}
\end{tabular}
\caption{\label{fig:zalesak}The signed-distance function and its zero contour for
Zalesak's disk problem at times $t=0$, $79$, $157$, $236$, $314$, $393$, $471$,
$550$, and $628$. From left to right, top to bottom. There is no significant
numerical diffusion: the corners do not get smeared out over time. Due to the
Lagrangian nature of the method, the results for $t=0$ and $t=628$ are even
identical.}
\end{figure}

\subsection{Comparison with Discontinuous Galerkin Methods}
Schroeder, Lehrenfeld, Linke, and Lube\autocite{schroeder2018} performed long-term
simulations with $t\in[0,26]$ of a two-dimensional flow on the domain $\Omega:=
(0,1)^2$ with periodic boundary conditions. The exact solution to their benchmark
problem reads:
\begin{equation}
\vv{u}_0(\vv{x}) =
\begin{pmatrix}\sin(2\pi x_1)\sin(2\pi x_2) \\ \cos(2\pi x_1)\cos(2\pi x_2)\end{pmatrix},
\quad
\vv{u}(\vv{x},t) =  e^{-8\pi^2\nu t}\vv{u}_0(\vv{x}),
\quad
\nu = 10^{-5}.
\end{equation}
This flow is dynamically unstable and small perturbations quickly lead to
chaotic motion. In numerical methods this will inevitably occur, the 
challenge is to minimise the rate at which the numerical solutions diverge.

In their paper they emphasise the importance of \emph{exactly enforcing} $\nabla
\cdot\vv{u}=0$ in numerical simulations. Methods that do not share this property,
e.\,g., finite element formulations based on the Taylor--Hood pair, lose 12
significant digits before reaching $t=2$. Schroeder, Lehrenfeld, Linke, and
Lube applied an exactly divergence free, eighth order, hybridised discontinuous
Galerkin formulation~(HDG8) to this problem. In their simulation the rate of
error increase was significantly smaller. Vortex methods also fulfil $\nabla
\cdot\vv{u}=0$ exactly, and we were kindly provided with the HDG8 simulation
results for a comparison.

The two-dimensional Navier--Stokes equations in their vorticity formulation
read:
\begin{equation}
\pdx{\omega}{t} + (\vv{u}\cdot\nabla)\omega = \nu\Delta\omega.
\end{equation}
Here, the velocity $\vv{u}=(u_1,u_2)^\top$ is the solution to the system:
\begin{equation}
\left\lbrace
\begin{aligned}
\partial_{x_1}u_1 + \partial_{x_2}u_2 &= 0, \\
\partial_{x_1}u_2 - \partial_{x_2}u_1 &= \omega.
\end{aligned}
\right.
\end{equation}
The solution to this system can be obtained from $\omega$ by first solving the
Poisson problem $-\Delta\psi = \omega$ for the stream function $\psi$ with
periodic boundary conditions and then setting $\vv{u}=(\partial_{x_2}\psi,
-\partial_{x_1}\psi)^\top$ as before. The vortex particle method discretises
this set of equations and proceeds in the following steps:
\begin{enumerate}
\item Let $t=0$ and initialise the particle field $\omega_h$ from $\omega_0 =
\mathrm{curl}\;\vv{u}_0 = \partial_{x_1}u_{0,2} - \partial_{x_2}u_{0,1}$ as
described in \cref{sec:particleapprox}. We place the particles at \emph{random
locations} inside the cells of the $h$-grid: the spectral accuracy of the
mid-point rule in this case would give a wrong picture of the method's
accuracy.
\item Repeat until $t=T$:
\begin{enumerate}
\item[2.1.] Compute $\omega_{\sigma,h}(\cdot,t) = A_h^{-1}\omega_h(t)$ in
$V_\sigma^n(\Omega)$, where the space $V_\sigma^n(\Omega)$ is  supplemented with
periodic boundary conditions.
\item[2.2.] Solve the Poisson problem $-\Delta\psi_{\sigma,h}(\cdot,t)=
\omega_{h,\sigma}(\cdot,t)$ in $V_\sigma^{n+2}(\Omega)$, again with periodic
boundary conditions. We use a standard Galerkin method for this.
\item[2.3.] Define $\vv{u}_{h,\sigma}(\cdot,t) := \bigl(\partial_{x_2}
\psi_{h,\sigma}(\cdot,t),-\partial_{x_1}\psi_{h,\sigma}(\cdot,t)\bigr)^\top$. 
\item[2.4.] Advance the following system of ODEs by one step $\Delta t$ in time
using, e.\,g., a Runge--Kutta method:
\begin{equation}
\left\lbrace
\begin{aligned}
\ddx{\vv{x}_i}{t}(t) &= \vv{u}_{h,\sigma}\bigl(\vv{x}_i(t),t\bigr), \\
\ddx{\omega_i}{t}(t) &= \nu\Delta\omega_{h,\sigma}\bigl(\vv{x}_i(t),t\bigr),
\end{aligned}
\right.\qquad i=1,\dotsc,N.
\end{equation}
\end{enumerate}
\end{enumerate}

Schroeder, Lehrenfeld, Linke, and Lube\autocite{schroeder2018} performed their
simulations on unstructured grids of sizes $\sigma\approx 0.25$ and $\sigma%
\approx 0.05$, together with a second order time discretisation and a fixed time-%
step of $\Delta t = 10^{-4}$. We perform our experiments with $\sigma\in\lbrace
\tfrac{1}{13},\tfrac{1}{30}\rbrace$, $h=\tfrac{\sigma}{2}$, i.\,e., $d=\tfrac{1}%
{2}$, and orders $n\in\lbrace4,6,8\rbrace$. The grid-sizes were chosen such that
for $\sigma=\tfrac{1}{13}$ the initial $L^2(\Omega)$-error in the velocity for
$n=6$ roughly equals that of the HDG8 computation on the coarse mesh. Similarly,
for $\sigma=\tfrac{1}{30}$ the initial error of the HDG8 scheme roughly equals
that of the eighth order vortex method. For the time discretisation we use
Verner's \enquote{most efficient} ninth order Runge--Kutta method~\autocite{%
verner2010} with a fixed time-step of $\Delta t = \tfrac{1}{32}$. This time-step
is more than 300 times larger than the one used for the HDG8 computations.

The results are depicted in \Cref{fig:schroeder-bench}. One clearly sees that the
vortex methods perform very much like the HDG8 schemes and that errors increase
at an equal rate. We conclude that vortex methods can compete with state-of-the-art
discontinuous Galerkin methods. At the same time, due to the high degree of
regularity of the ansatz spaces, very few degrees of freedom (DOF) are necessary.
In fact, in this particular case we have three degrees of freedom per particle
(two for $\vv{x}_i$ and one for $\omega_i$) and two per grid-node (one each for
$\psi_{h,\sigma}$ and $\omega_{h,\sigma}$); both numbers are independent of the
order $n$. For $\sigma= \tfrac{1}{30}$ one obtains $N_\text{DOF} = 2\times30^2+
3\times60^2 =12\,600$ compared to $23\,903$ non-eliminable DOFs for the HDG8 scheme.

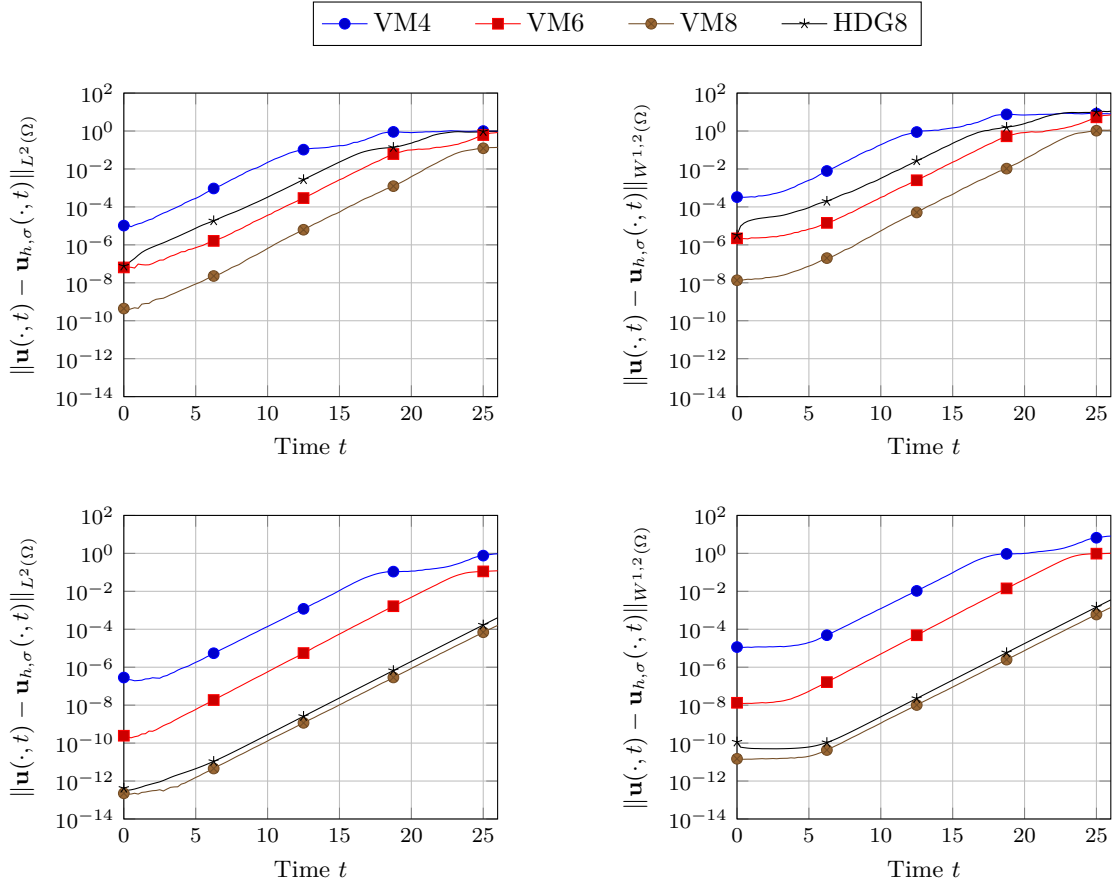
\begin{figure}
\centering
\begin{tikzpicture}
\begin{groupplot}
[
	small,
    group style={group size=2 by 2, horizontal sep=0.2\textwidth, vertical sep=0.1\textwidth},
	ymode = log,
    grid = major,
	xmin = {0},
	xmax = {26},
    ymin = {1e-14},
	ymax = {1e2},
    ylabel = {$\Vert\vv{u}(\cdot,t) - \vv{u}_{h,\sigma}(\cdot,t)\Vert_{L^2(\Omega)}$},
    xlabel = {Time $t$},
    ytick = {1e-14,1e-12,1e-10,1e-8,1e-6,1e-4,1e-2,1e0,1e2},
]
\nextgroupplot[mark repeat=25,mark phase=0,ylabel = {$\Vert\vv{u}(\cdot,t) - \vv{u}_{h,\sigma}(\cdot,t)\Vert_{L^2(\Omega)}$}]
\addplot table[x=time,y=L2Velocity]{vm4c-schroeder.dat};
\addplot table[x=time,y=L2Velocity]{vm6c-schroeder.dat};
\addplot table[x=time,y=L2Velocity]{vm8c-schroeder.dat};
\addplot table[x=time,y=L2Velocity]{hdg8c-schroeder.dat};
\nextgroupplot[mark repeat=25,mark phase=0,ylabel = {$\Vert\vv{u}(\cdot,t) - \vv{u}_{h,\sigma}(\cdot,t)\Vert_{W^{1,2}(\Omega)}$}]
\addplot table[x=time,y=H1Velocity]{vm4c-schroeder.dat};
\addplot table[x=time,y=H1Velocity]{vm6c-schroeder.dat};
\addplot table[x=time,y=H1Velocity]{vm8c-schroeder.dat};
\addplot table[x=time,y=H1Velocity]{hdg8c-schroeder.dat};
\nextgroupplot[mark repeat=25,mark phase=0,ylabel = {$\Vert\vv{u}(\cdot,t) - \vv{u}_{h,\sigma}(\cdot,t)\Vert_{L^2(\Omega)}$}]
\addplot table[x=time,y=L2Velocity]{vm4f-schroeder.dat};
\addplot table[x=time,y=L2Velocity]{vm6f-schroeder.dat};
\addplot table[x=time,y=L2Velocity]{vm8f-schroeder.dat};
\addplot table[x=time,y=L2Velocity]{hdg8f-schroeder.dat};
\nextgroupplot[mark repeat=25,mark phase=0,ylabel = {$\Vert\vv{u}(\cdot,t) - \vv{u}_{h,\sigma}(\cdot,t)\Vert_{W^{1,2}(\Omega)}$},
legend to name = SharedLegend, legend style = {legend columns=4,/tikz/every even column/.append style={column sep=0.5cm}}]
\addplot table[x=time,y=H1Velocity]{vm4f-schroeder.dat};
\addplot table[x=time,y=H1Velocity]{vm6f-schroeder.dat};
\addplot table[x=time,y=H1Velocity]{vm8f-schroeder.dat};
\addplot table[x=time,y=H1Velocity]{hdg8f-schroeder.dat};
\addlegendimage{red mark=*};
\addlegendentry{VM4};
\addlegendentry{VM6};
\addlegendentry{VM8};
\addlegendentry{HDG8};
\end{groupplot}
\path (group c1r1.north west) -- node[above=0.5cm]{\pgfplotslegendfromname{SharedLegend}} (group c2r1.north east);
\end{tikzpicture}
\caption{\label{fig:schroeder-bench}Evolution of the $L^2(\Omega)$- and $W^{1,2}
(\Omega)$-errors of the fourth, sixth, and eighth order vortex methods over time
in comparison to an eighth order discontinuous Galerkin scheme on coarse~(top)
and fine~(bottom) meshes.}
\end{figure}

\subsection{Application to a three-dimensional Flow}
As an example of a three-dimensional problem, we consider the Arnold--Beltrami--%
Childress flow~\cite[pp.~56ff]{majda2001} on the domain $\Omega=(0,2\pi)^3$ for $t\in[0,10]$:
\begin{equation}
\vv{u}_0(\vv{x}) =
\begin{pmatrix} \sin(x_3) + \cos(x_2) \\ \sin(x_1) + \cos(x_3) \\ \sin(x_2) + \cos(x_1)\end{pmatrix},
\quad
\vv{u}(\vv{x},t) =  e^{-\nu t}\vv{u}_0(\vv{x}),
\quad
\nu = 10^{-3}.
\end{equation}
This flow is one of the few known fully three-dimensional, analytic solutions to
the Navier--Stokes equations with periodic boundary conditions. Due to the larger
viscosity, this flow is only mildly unstable.

In three dimensional space, the vorticity formulation of the Navier--Stokes
equations reads:
\begin{equation}
\pdx{\gv{\omega}}{t} + (\vv{u}\cdot\nabla)\gv{\omega} =
\bigl(\boldnabla\vv{u}\bigr)\cdot\gv{\omega} + \nu\boldlaplace\gv{\omega}.
\end{equation}
Unlike in two dimensions, the vorticity $\gv{\omega}$ now also is a vector-%
valued quantity, and the equation is augmented with the so-called \emph{%
vortex stretching} term $\bigl(\boldnabla\vv{u}\bigr)\cdot\gv{\omega}$.
The velocity $\vv{u}$ can be obtained from the vortictiy $\gv{\omega}$ by
solving the system $\nabla\cdot\vv{u}=0$, $\nabla\times\vv{u}=\gv{\omega}$.
The vortex method discretises this set of equations analogously to the two-%
dimensional case:
\begin{enumerate}
\item Let $t=0$ and initialise the particle field $\gv{\omega}_h$ from
$\gv{\omega}_0 = \nabla\times\vv{u}_0$. We place the particles at \emph{random
locations} inside the cells of the $h$-grid.
\item Repeat until $t=T$:
\begin{enumerate}
\item[2.1.] Compute $\gv{\omega}_{\sigma,h}(\cdot,t) = A_h^{-1}\gv{\omega}_h(t)$
in $\left(V_\sigma^n(\Omega)\right)^3$, where the space $\left(V_\sigma^n(\Omega)
\right)^3$ is supplemented with periodic boundary conditions.
\item[2.2.] Solve the Poisson problem $-\boldlaplace\gv{\psi}_{\sigma,h}(\cdot,t)=
\gv{\omega}_{h,\sigma}(\cdot,t)$ in $\left(V_\sigma^{n+2}(\Omega)\right)^3$,
again with periodic boundary conditions. We use a standard Galerkin method for this.
\item[2.3.] Define $\vv{u}_{h,\sigma}(\cdot,t) := \nabla\times\gv{\psi}_{h,\sigma}(\cdot,t)$. 
\item[2.4.] Advance the following system of ODEs by one step $\Delta t$ in time
using, e.\,g., a Runge--Kutta method:
\begin{equation}
\left\lbrace
\begin{aligned}
\ddx{\vv{x}_i}{t}(t) &= \vv{u}_{h,\sigma}\bigl(\vv{x}_i(t),t\bigr), \\
\ddx{\gv{\omega}_i}{t}(t) &= 
\biggl[\bigl(\boldnabla\vv{u}_{h,\sigma}\bigr)\cdot
       \bigl(\nabla\times\vv{u}_{h,\sigma}\bigr)\biggr]
\bigl(\vv{x}_i(t),t\bigr) +
\nu\boldlaplace\gv{\omega}_{h,\sigma}\bigl(\vv{x}_i(t),t\bigr),
\end{aligned}
\right.\qquad i=1,\dotsc,N.
\end{equation}
\end{enumerate}
\end{enumerate}

We perform a convergence study using $\sigma\in[\tfrac{2\pi}{10},\tfrac{2\pi}{40}]$,
$h=\tfrac{\sigma}{2}$, i.\,e., $d=\tfrac{1}{2}$, $n\in\lbrace 4,6\rbrace$, and a
fixed time-step of $\Delta t = \tfrac{1}{25}$ using Verner's ninth order Runge--%
Kutta method.

For $\sigma=\frac{2\pi}{10}$ and $n=4$ a video of the evolving particle field at
25 steps-per-second was created.\footnote{\url{https://rwth-aachen.sciebo.de/s/5tueQcMJeqWjPut},
a temporary link for the preprint. Can be played using, e.\,g., the VLC Media
Player.} The reader is invited to take a look: while it is hard to measure the
beauty of a method or flow, one can clearly see that this flow is non-trivial and
that the particle method remains stable. This is also quantitatively confirmed in
\Cref{fig:abc-bench}, where the evolution of the $W^{1,2}(\Omega)$ velocity error
over time is shown: while for the coarse discretisations the error grows only
mildly over time, it stays essentially constant for the fine ones. In
\Cref{tab:abc-convergence} the errors for the various discretisations at final
time $T=10$ are shown. The results confirm that the methods are of order $n$.

\begin{figure}
\centering
\begin{tikzpicture}
\begin{groupplot}
[
    small,
    group style={group size=2 by 1, horizontal sep=0.2\textwidth},
    ymode = log,
    grid = major,
    xmin = {0},
    xmax = {10},
    ylabel = {$\Vert\vv{u}(\cdot,t) - \vv{u}_{h,\sigma}(\cdot,t)\Vert_{W^{1,2}(\Omega)}$},
    xlabel = {Time $t$},
]
\nextgroupplot[mark repeat = 25, mark phase = 0, ymin=1e-5, ymax=1e-1]
\addplot table[x=time,y=H1Velocity]{abcflow-vm4-10-20.dat};
\addplot table[x=time,y=H1Velocity]{abcflow-vm4-14-28.dat};
\addplot table[x=time,y=H1Velocity]{abcflow-vm4-20-40.dat};
\addplot table[x=time,y=H1Velocity]{abcflow-vm4-28-56.dat};
\addplot table[x=time,y=H1Velocity]{abcflow-vm4-40-80.dat};
\nextgroupplot[mark repeat = 25, mark phase = 0, ymin=1e-9, ymax=1e-3,
legend to name = SharedLegend, legend style = {legend columns=5,/tikz/every even column/.append style={column sep=0.5cm}}]
\addplot table[x=time,y=H1Velocity]{abcflow-vm6-10-20.dat};
\addplot table[x=time,y=H1Velocity]{abcflow-vm6-14-28.dat};
\addplot table[x=time,y=H1Velocity]{abcflow-vm6-20-40.dat};
\addplot table[x=time,y=H1Velocity]{abcflow-vm6-28-56.dat};
\addplot table[x=time,y=H1Velocity]{abcflow-vm6-40-80.dat};
\addlegendimage{red mark=*};
\addlegendentry{$\sigma=\frac{2\pi}{10}$};
\addlegendentry{$\sigma=\frac{2\pi}{14}$};
\addlegendentry{$\sigma=\frac{2\pi}{20}$};
\addlegendentry{$\sigma=\frac{2\pi}{28}$};
\addlegendentry{$\sigma=\frac{2\pi}{40}$};
\end{groupplot}
\path (group c1r1.north west) -- node[above=0.5cm]{\pgfplotslegendfromname{SharedLegend}} (group c2r1.north east);
\end{tikzpicture}
\caption{\label{fig:abc-bench}The evolution of the $W^{1,2}(\Omega)$ velocity
error for the ABC flow problem. Note the different scales for order $n=4$ on the
left and order $n=6$ on the right. The error increases only mildly over time for
the coarse discretisations and stays almost unchanged for the fine ones.}
\end{figure}
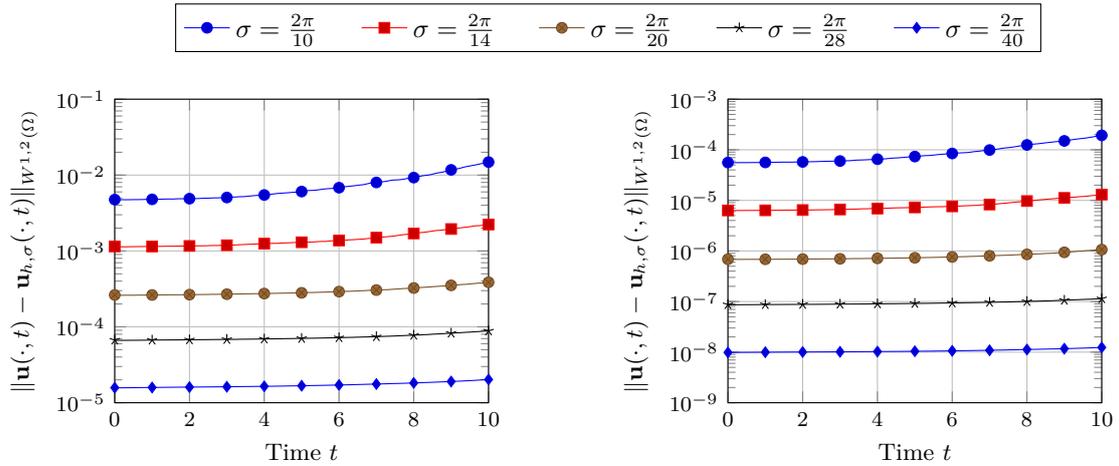

\begin{table}
\centering
\renewcommand{\arraystretch}{1.1}
\begin{tabular*}{0.717\textwidth}{lcccc}
\toprule
$n = 4.$ &
$\hspace{0.6cm}\Vert\vv{u}-\vv{u}_{h,\sigma}\Vert_{L^2(\Omega)}$      &  EOC &
$\hspace{0.8cm}\Vert\vv{u}-\vv{u}_{h,\sigma}\Vert_{W^{1,2}(\Omega)}$  &  EOC \\\midrule
$\sigma=\tfrac{2\pi}{10}$ & \sci{3.60}{-3} & ---  & \sci{1.48}{-2} &  ---  \\
$\sigma=\tfrac{2\pi}{14}$ & \sci{4.12}{-4} & 6.44 & \sci{2.23}{-3} & 5.62  \\
$\sigma=\tfrac{2\pi}{20}$ & \sci{5.17}{-5} & 5.82 & \sci{3.86}{-4} & 4.92  \\
$\sigma=\tfrac{2\pi}{28}$ & \sci{1.01}{-5} & 4.85 & \sci{8.84}{-5} & 4.38  \\
$\sigma=\tfrac{2\pi}{40}$ & \sci{2.45}{-6} & 3.97 & \sci{2.02}{-5} & 4.14  \\\bottomrule
\end{tabular*}

\vspace{1cm}

\begin{tabular*}{0.717\textwidth}{lcccc}
\toprule
$n = 6.$ &
$\hspace{0.6cm}\Vert\vv{u}-\vv{u}_{h,\sigma}\Vert_{L^2(\Omega)}$      &  EOC &
$\hspace{0.8cm}\Vert\vv{u}-\vv{u}_{h,\sigma}\Vert_{W^{1,2}(\Omega)}$  &  EOC \\\midrule
$\sigma=\tfrac{2\pi}{10}$ & \sci{4.63}{-5} & ---  & \sci{1.93}{-4} &  ---  \\
$\sigma=\tfrac{2\pi}{14}$ & \sci{2.44}{-6} & 8.75 & \sci{1.30}{-5} &  8.02 \\
$\sigma=\tfrac{2\pi}{20}$ & \sci{1.36}{-7} & 8.10 & \sci{1.06}{-6} &  7.03 \\
$\sigma=\tfrac{2\pi}{28}$ & \sci{1.12}{-8} & 7.42 & \sci{1.14}{-7} &  6.63 \\
$\sigma=\tfrac{2\pi}{40}$ & \sci{1.30}{-9} & 6.04 & \sci{1.24}{-8} &  6.22 \\\bottomrule
\end{tabular*}
\caption{\label{tab:abc-convergence}The $L^2(\Omega)$ and $W^{1,2}(\Omega)$
velocity errors at time $T=10$ for the ABC flow problem at different
discretisation sizes $\sigma$ and orders $n=4$ (top) and $n=6$ (bottom).
The empirical orders of convergence (EOC) approach the theoretical ones.}
\end{table}

\section{Outlook}\label{sec:outlook}
The regularisation scheme considered in this article uses a uniform, non-adaptive
Cartesian grid. As shown in theory and practice, this scheme is asymptotically
optimal for convection dominated flows if both the initial data and the velocity
field are sufficiently smooth. In these cases vortex methods in particular can
compete with discontinuous Galerkin methods.

Many flows of practical interest, however, feature steep gradients, leading to
similarly steep gradients in the solution. This is especially true for turbulent
flows. If applied to such flows, the uniform regularisation scheme presented in
this work requires very small choices of $d$ for $A_h^{-1}$ to remain well-%
conditioned, thereby reducing its efficiency. 

The particles naturally adapt to such flow fields. In fact, particles cluster
where steep gradients occur, while the particle field \enquote{thins out} in the
areas where gradients get flat. To see this, let us reconsider the analytic
solution of the linear advection equation: $u(\vv{x},t)=
u_0\bigl(\traj{t}{0}(\vv{x})\bigr)$. A simple application of the chain rule yields:
\begin{equation}
\nabla u(\vv{x},t) = \nabla u_0\bigl(\traj{t}{0}(\vv{x})\bigr)\,\cdot\,\boldnabla\traj{t}{0}(\vv{x}).
\end{equation}
Therefore, steep gradients that were not already present in $u_0$ can only arise
if $\boldnabla\traj{t}{0}$ is \enquote{large}. Let $\vv{x}_i$ and $\vv{x}_j$
denote two particles that are close to one another at time $t$ and let $\vv{z}:=
(\vv{x}_j(t) - \vv{x}_i(t))/|\vv{x}_j(t) - \vv{x}_i(t)|$. We then have
approximately:
\begin{equation}
\pdx{\traj{t}{0}}{\vv{z}}\bigl(\vv{x}_i(t)\bigr) \approx
\frac{\traj{t}{0}\bigl(\vv{x}_j(t)\bigr)-\traj{t}{0}\bigl(\vv{x}_i(t)\bigr)}{|\vv{x}_j(t)-\vv{x}_i(t)|} =
\frac{\vv{x}_j(0)-\vv{x}_i(0)}{|\vv{x}_j(t)-\vv{x}_i(t)|}.
\end{equation}
Thus, derivatives of $\traj{t}{0}$ get large when particles are close together
that previously were not. Conversely, the derivatives are small if particles
move away from one another. It therefore would make sense to also adapt the
Ansatz spaces for the regularisation scheme accordingly: the resolution should
be coarse where there are few particles and fine where there are many. This
would also ensure that the operator $A_h^{-1}$ corresponding to these spaces
remains well-defined. In the context of splines it would be interesting to
develop methods based on wavelets to achieve this adaption of the Ansatz spaces.

One can also assign new quadrature weights to a given particle field. For this
one subdivides the domain $\Omega$ into new cells $Q_i$, such that each contains
exactly one particle. Afterwards, each particle is assigned the weight $w_i=
\meas_\spdim(Q_i)$. We believe that this also makes our method interesting for
scattered data approximation.

Another topic that was not covered in detail here is time discretisation. In
this work we simply used standard Runge--Kutta methods. Given the apparent
importance of exactly enforcing $\nabla\cdot\vv{u}=0$ when solving the
incompressible Navier--Stokes equations, it would make sense to use
volume preserving schemes for solving the ODEs $\dot{\vv{x}}_i(t)=
\vv{u}\bigl(\vv{x}_i(t),t\bigr)$, $i=1,\dotsc,N$. Maybe this would even further
improve the long-term accuracy of particle approximations when applied to such
problems.

\appendix

\section*{Appendix}

\subsection*{Modification of the original Proof of $L^\infty(\Omega)$-Stability\autocite{douglas1974,crouzeix1987}}

To establish the boundedness of $A_h^{-1}$ as an operator $V_\sigma^{-n,p}(\Omega)
\to V_\sigma^{n,p}(\Omega)$, $p\neq 2$, it suffices to consider functionals of the
form $\idx{\Omega}{}{fv_\sigma}{x}$, $f\in L^p(\Omega)$. We thus let $f\in L^p(\Omega)$,
fix a cell $Q_j^\sigma\in \Omega$ and define $f_j:=f$ on $Q_j^\sigma$ and $f_j:=0$
on $\Omega\setminus Q_j^\sigma$. We will establish that $f_{j,\sigma}:=A_h^{-1}f_j$
decays at an exponential rate away from $Q_j^\sigma$.

We have for all $v_\sigma\in V_\sigma^n(\Omega)$ that vanish on $Q_j^\sigma$:
$\langle A_hf_{j,\sigma}, v_\sigma\rangle = \idx{Q_j^\sigma}{}{fv_\sigma}{x}=0$.
We now construct such a function $v_\sigma$. Let us define the neighbourhoods of
$Q_j^\sigma$ as $D_{j,0}:=\emptyset$, $D_{j,1}:=Q_j^\sigma$, and for all other
$k\in\mathbb{N}$:
\begin{equation}
D_{j,k}:=\bigcup\bigl\lbrace Q_l^\sigma\in\Omega:\ 
|\text{centre}(Q_l^\sigma)-\text{centre}(Q_j^\sigma)|_\infty < k\sigma
\bigr\rbrace,
\end{equation}
where $|\cdot|_\infty$ denotes the Manhattan distance norm on $\wholespace$. We
define $v_\sigma$ as follows: for $k\geq n$ we let $v_\sigma = f_{j,\sigma}$ on
$\Omega\setminus D_{j,k}$ and set the remaining B-spline coefficients to zero.
It follows that $v_\sigma = 0$ on $D_{j,k-(n-1)}$. Thus, denoting the quadrature
error $e:=\sum_{i=1}^{N}w_if_{j,\sigma}(\vv{x}_i)v_\sigma(\vv{x}_i)-\idx{\Omega}%
{}{f_{j,\sigma}v_\sigma}{\vv{x}}$, we have:
\begin{multline}
0 = \sum_{i=1}^{N}w_if_{j,\sigma}(\vv{x}_i)v_\sigma(\vv{x}_i) =
\idx{\Omega}{}{f_{j,\sigma}v_\sigma}{\vv{x}} + 
\left(\sum_{i=1}^{N}w_if_{j,\sigma}(\vv{x}_i)v_\sigma(\vv{x}_i)-\idx{\Omega}%
{}{f_{j,\sigma}v_\sigma}{\vv{x}}\right)\\
\iff 
\idx{\Omega\setminus D_{j,k}}{}{f_{j,\sigma}v_\sigma}{\vv{x}} + e
= \idx{D_{j,k}\setminus D_{j,k-(n-1)}}{}{f_{j,\sigma}v_\sigma}{\vv{x}} \\
\iff
\idx{\Omega\setminus D_{j,k}}{}{f_{j,\sigma}^2}{\vv{x}} + e
= \idx{D_{j,k}\setminus D_{j,k-(n-1)}}{}{f_{j,\sigma}v_\sigma}{\vv{x}}.
\end{multline}
Due to the stability of the B-spline basis, we have $\Vert v_\sigma
\Vert_{L^2(D_{j,k}\setminus D_{j,k-(n-1)})}\leq C_1\Vert f_{j,\sigma}
\Vert_{L^2(D_{j,k}\setminus D_{j,k-(n-1)})}$. The right hand side
of the last equation can thus be bounded from above by 
$C_1\Vert f_{j,\sigma}\Vert_{L^2(D_{j,k}\setminus D_{j,k-(n-1)})}^2$.
On the left we insert the quadrature error bound~\eqref{eqn:quaderror}
with error constant $C_2$ and obtain:
\begin{multline}\label{eqn:omfg}
\Vert f_{j,\sigma}\Vert_{L^2(\Omega\setminus D_{j,k})}^2 -
C_2h|f_{j,\sigma}v_\sigma|_{W^{1,1}(\Omega)}
\leq C_1 \Vert f_{j,\sigma}\Vert_{L^2(D_{j,k}\setminus D_{j,k-(n-1)})}^2 \\
\iff
\Vert f_{j,\sigma}\Vert_{L^2(\Omega\setminus D_{j,k})}^2 -
C_2h|f_{j,\sigma}^2|_{W^{1,1}(\Omega\setminus D_{j,k})}
\leq \\ 
C_1\Vert f_{j,\sigma}\Vert_{L^2(D_{j,k}\setminus D_{j,k-(n-1)})}^2 +
C_2h|f_{j,\sigma}v_\sigma|_{W^{1,1}(D_{j,k}\setminus D_{j,k-(n-1)})}.
\end{multline}
Now, using an inverse inequality, with constant $C_3$:
\begin{equation}
C_2h|f_{j,\sigma}^2|_{W^{1,1}(\Omega\setminus D_{j,k})}
\leq C_2C_3\frac{h}{\sigma}\Vert f_{j,\sigma}^2\Vert_{L^1(\Omega\setminus D_{j,k})}
= dC_2C_3\Vert f_{j,\sigma}\Vert^2_{L^2(\Omega\setminus D_{j,k})}.
\end{equation}
The left side of the last inequality in~\eqref{eqn:omfg} can thus be bounded
from below by $C_4\Vert f_{\sigma,j}\Vert_{L^2(\Omega\setminus D_{j,k})}^2$,
with $C_4=(1-dC_2C_3)$. For $d$ small enough we have $C_4>0$. Similarly, the
right side can be bounded by $C_5\Vert f_{j,\sigma}\Vert_{L^2(D_{j,k}\setminus D_{j,{k-(n-1)}})}$,
with error constant $C_5=C_1(1+dC_2C_3)$. Thus, with $C_6=\tfrac{C_5}{C_4}$:
\begin{equation}
\Vert f_{j,\sigma}\Vert_{L^2(\Omega\setminus D_{j,k})}^2 \leq C_6
\Vert f_{j,\sigma}\Vert_{L^2(D_{j,k}\setminus D_{j,k-(n-1)})}^2.
\end{equation}
But we have $D_{j,k}\setminus D_{j,k-(n-1)} = (\Omega\setminus D_{j,k-(n-1)}) \setminus
(\Omega\setminus D_{j,k})$, and thus we obtain:
\begin{equation}
\Vert f_{j,\sigma}\Vert_{L^2(\Omega\setminus D_{j,k})}^2 \leq 
\underbrace{\frac{C_6}{1+C_6}}_{=:C_7}
\Vert f_{j,\sigma}\Vert_{L^2(\Omega\setminus D_{j,k-(n-1)})}^2,
\end{equation}
where we obviously have $0<C_7<1$. For large values of $k$ this argument can now
now be repeated on the right hand side, and we obtain:
\begin{equation}
\Vert f_{j,\sigma}\Vert_{L^2(\Omega\setminus D_{j,k})}^2 \leq 
C_7^{\lfloor\frac{k}{n}\rfloor}\Vert f_{j,\sigma}\Vert_{L^2(\Omega)}^2.
\end{equation}
This is the desired exponential decay. From here the proof is identical to the
original ones.\autocite{douglas1974,crouzeix1987}

\subsection*{Source Code for the Initial Data of Zalesak's Disk}
\begin{lstlisting}
//
// Signed distance function for Zalesak's disk.
//
real initial_data( point x )
{
    using std::min;

    constexpr point A { 0, 0, 0 };
    constexpr point B { 0, 0.25, 0 };
    constexpr point C {-0.025, 0.35, 0 };
    constexpr point D { 0.025, 0.35, 0 };
    constexpr real  R  { 0.15 };
    const     real phi { std::asin(0.025/0.15) };
    constexpr point E {-0.025, 0.25 - R*std::cos(phi), 0 }; 
    constexpr point F { 0.025, 0.25 - R*std::cos(phi), 0 }; 

    auto is_in_cone = [phi]( point x ) noexcept -> bool
    {
        return std::acos(scal_prod(x-B,A-B)/( (x-B).r() * (A-B).r() )) < phi;
    };

    auto is_in_box = [C,D]( point x ) noexcept -> bool
    {
        return (x.x > C.x) && (x.x < D.x) && (x.y < D.y);
    };

    if ( length(x-B)<R )
    {
        if ( is_in_box(x) )
        {
            if ( x.y < F.y )
            {
                return -min(length(x-E),length(x-F));
            }
            else
            {
                return -min( min(x.x-E.x,F.x-x.x), C.y-x.y );
            }
        }
        else
        {
            if ( x.y > D.y )
            {
                real dist = min( min(length(x-C),length(x-D)), R-length(x-B) );
                if ( C.x < x.x && x.x < D.x ) return min(dist,x.y-C.y);
                else return dist;
            }
            else
            {
                if ( x.x < B.x )
                {
                    return min( C.x - x.x, R - length(x-B) );
                }
                else
                {
                    return min( x.x - D.x, R - length(x-B) );
                }
            }
        }
    }
    else
    {
        if ( is_in_cone(x) )
        {
            return -std::min(length(x-E),length(x-F));
        }
        else
        {
            return -((x-B).r() - R);
        }
    }
}
\end{lstlisting}

\printbibliography
\end{document}